\newcommand\hb[1]{\mathop{%
  \tikz[baseline=(n.base)]{\node(n)[inner sep=1pt, outer sep=0pt]{$#1$};%
    \draw[line cap=round](n.west)--(n.south west)--(n.south east)--(n.east);}}  }
\newcommand\lhb[1]{\mathop{%
  \tikz[baseline=(n.base)]{\node(n)[inner sep=1pt, outer sep=0pt]{$#1$};%
    \draw[line cap=round](n.south west)--(n.south east)--(n.east); }}}
\newcommand\rhb[1]{\mathop{%
  \tikz[baseline=(n.base)]{\node(n)[inner sep=1pt, outer sep=0pt]{$#1$};%
    \draw[line cap=round](n.west)--(n.south west)--(n.south east); }}}
    \newcommand{\deemph}[1]{{\color{black!40}#1}}
\newtheorem{theorem}{Theorem}[section]
\newtheorem{cor}[theorem]{Corollary}
\newtheorem{conj}[theorem]{Conjecture}
\newtheorem{lem}[theorem]{Lemma}
\newtheorem{de}[theorem]{Definition}
\newtheorem{ex}[theorem]{Example}
\DeclareMathOperator{\Mod}{Mod}
\newtheoremstyle{dotless}{}{}{\itshape}{}{\bfseries}{}{ }{}
\theoremstyle{dotless}
\newtheorem*{rem}{Remark}
\DeclareMathOperator{\pf}{pref}
\DeclareMathOperator{\suf}{suff}
\DeclareMathOperator{\pat}{pat}
\DeclareMathOperator{\vect}{tup}
\DeclareMathOperator{\class}{class}
\DeclareMathOperator{\rhs}{rhs}
\DeclareMathOperator{\MERF}{MERF}
 \theoremstyle{dotless}
\newenvironment{customthm}[1]
  {\innercustomthm}
  {\endinnercustomthm}
\begin{document}

 \title{On the $2$-Abelian Complexity of the Thue--Morse Word}
\author{Florian Greinecker \\  \texttt{greinecker@math.tugraz.at} \\ \small{\textit{ Department of Analysis and Computational Number Theory (Math A),}}\\
\small{\textit{Steyrergasse 30, 8010 Graz, Austria}}}
% 
% \address{Department of Analysis and Computational
% Number Theory (Math A), \\Steyrergasse 30, 8010 Graz, Austria}

% \begin{keyword}
% Thue--Morse word \sep complexity of infinite words \sep $2$-abelian complexity \sep $2$-regular sequence %% keywords here, in the form: keyword \sep keyword
% \MSC 11B85 \sep 68Q70 \sep  68R15
% %% MSC codes here, in the form: \MSC code \sep code
% %% or \MSC[2008] code \sep code (2000 is the default)
% 
% \end{keyword}
% 
% \end{frontmatter}

\maketitle

\begin{abstract}
We show that the $2$-abelian complexity  of the infinite Thue--Morse word is $2$\hbox{-}regular, and  other properties of the $2$-abelian complexity, most notably that it is a
concatenation  of palindromes of increasing length. We also
show sharp bounds for the length of unique extensions of factors of size $n$, occurring in the Thue--Morse word.

\smallskip
\noindent \textit{Keywords:} Thue--Morse word, complexity of infinite words, $2$-abelian complexity, $2$-regular sequence 

\noindent \textit{MSC2010:} 11B85, 68Q70,  68R15 
\end{abstract}

\section{Introduction}

This paper contributes to the study of the Thue-Morse word. The infinite Thue--Morse word $t=01101001100101101001011\cdots$
is defined as \[t:=\lim_{n \rightarrow \infty}m^n(0)\] where $m$ is the morphism
\[ m\colon 0 \mapsto 01, 1 \mapsto 10.\]
The set of all finite factors of the Thue--Morse word will be denoted by $T$, while $T_n$ stands for the set of Thue--Morse factors of length $n$.

In this paper we will prove three theorems about $t$. The first one is about extensions of factors of $t$ . If we want to prolongate a factor of $t$ to a longer factor
of $t$ there is sometimes
only one possible letter. For example after $00$ the next letter
has to be a $1$. We will give upper and lower bounds for the length of such extensions in Theorem \ref{rf}.

Then we take a look at the $2$-abelian complexity sequence of the Thue--Morse word. We will prove  that it is a concatenation
of palindromes of increasing length (Theorem \ref{pal}) and secondly the following theorem.

\begin{theorem} \label{main}
 The $2$-abelian complexity of the Thue--Morse word $t$ is $2$-regular.
\end{theorem}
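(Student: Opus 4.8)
The plan is to show that the 2-abelian complexity sequence, call it $a(n) = \rho^{(2)}_{\text{ab}}(n)$, satisfies a finite system of linear recurrences under the base-2 reading of $n$, which is exactly what 2-regularity means. Recall that a sequence is $2$-regular precisely when the $\mathbb{Z}$-module generated by its $2$-kernel---the set of subsequences $\{(a(2^k n + r))_n : k \geq 0, \ 0 \leq r < 2^k\}$---is finitely generated. So the concrete goal is to exhibit a finite generating set for this module, equivalently to find finitely many sequences $a_1, \dots, a_d$ (with $a_1 = a$) together with integer matrices $M_0, M_1$ such that for each $i$ and each digit $c \in \{0,1\}$ the subsequence $n \mapsto a_i(2n+c)$ is an explicit integer combination $\sum_j (M_c)_{ij}\, a_j(n)$.

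First I would set up the machinery to compute $a(n)$ concretely enough to detect its recursive structure. The 2-abelian complexity counts factors of length $n$ up to the equivalence that preserves the number of occurrences of each length-1 \emph{and} each length-2 block; by the standard reduction this amounts to recording, for each factor $w \in T_n$, the Parikh-type vector of its length-$2$ factors together with the first and last letters (the prefix and suffix of length $1$, which is why the paper introduces $\pf$, $\suf$, $\vect$, $\pat$). So I would define the equivalence class data $\class(w)$ as that finite tuple and have $a(n) = |\{\class(w) : w \in T_n\}|$. The crucial input is the morphic/self-similar structure of $t$: since $t = m(t)$, every factor of length roughly $2n$ arises as an image $m(u)$ of a factor $u$ of length about $n$, up to shifting the window by one position and trimming one letter at each end. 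I would make this precise as a desubstitution lemma describing how $T_{2n}$ and $T_{2n+1}$ are generated from $T_n$, and---this is the heart---track how the $2$-abelian class of a factor transforms under $m$ and under the one-letter boundary adjustments.

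The key steps, in order, are: (1) establish the desubstitution correspondence between factors of length $2n(+1)$ and factors of length $n$, carefully handling the two possible alignments (even/odd starting position) and the boundary letters; (2) show that the $2$-abelian class of $m(u)$ is determined by the $2$-abelian class of $u$ together with a bounded amount of boundary information, so that the relevant counting data lives in a finite-dimensional space; (3) package the counts into a finite vector-valued sequence $\mathbf{v}(n)$ of auxiliary quantities (e.g.\ counts of classes refined by prefix/suffix patterns, and perhaps differences such as $a(2n)-a(n)$) and derive linear relations expressing $\mathbf{v}(2n)$ and $\mathbf{v}(2n+1)$ in terms of $\mathbf{v}(n)$; and (4) conclude that the $2$-kernel of $a$ lies in the finitely generated module spanned by the components of $\mathbf{v}$, giving $2$-regularity. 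Throughout I would lean on the companion results already available in the paper, in particular Theorem~\ref{pal} (palindromic block structure) and the extension bounds of Theorem~\ref{rf}, to control the boundary behaviour and to guarantee that only finitely many abelian-class transitions can occur.

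The hard part will be step (2): the map $w \mapsto m(w)$ is well-behaved on length-$1$ block counts, but passing to length-$2$ blocks is exactly where genuine difficulty enters, because a length-$2$ factor of $m(u)$ can straddle the image of a single letter or span two consecutive letters of $u$, and the straddling blocks depend on the synchronization (whether the length-$2$ window sits on an even or odd boundary). Consequently the length-$2$ block vector of $m(u)$ is not a linear image of that of $u$ alone; it also depends on the sequence of adjacent-letter pairs in $u$, i.e.\ on $u$'s own length-$2$ data, and the trimming of boundary letters to realign windows introduces the off-by-one corrections that typically inflate the kernel. The main obstacle is therefore to prove that despite these corrections the collection of all needed refined counts closes up into a finite set---that no matter how deep one iterates the $2n(+1)$ recursion, only boundedly many distinct auxiliary count-sequences appear. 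I expect to resolve this by showing that the boundary data takes values in a finite set (bounded by the extension lengths of Theorem~\ref{rf}) and that the interior contribution is governed by the $1$-abelian (i.e.\ ordinary abelian) structure of $t$, which is already known to be bounded, so that the full system is finite and the linear recurrences $M_0, M_1$ can be written down explicitly.
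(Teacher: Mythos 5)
Your outline stalls exactly where the real work begins, and the mechanism you propose for closing the system is incorrect. In steps (2)--(3) you assert that the relevant counting data ``lives in a finite-dimensional space'' and that closure follows because ``the interior contribution is governed by the $1$-abelian structure of $t$, which is already known to be bounded.'' But the statistic that actually distinguishes $2$-abelian classes of Thue--Morse factors --- the number of occurrences of $00$ and $11$, i.e.\ $p(w)$ in the paper's notation --- is \emph{unbounded}: $\mathcal{P}_n$ itself grows like $\Theta(\log n)$ along subsequences (Lemma~\ref{unbo}, and already Karhum\"aki--Saarela--Zamboni). So boundedness of the abelian complexity gives you no control over the length-$2$ block vectors, and the auxiliary sequence $\mathbf{v}(n)$ you want to build has no a priori reason to span a finitely generated module. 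That finiteness is precisely the content of the theorem, not an input to it --- indeed, for general $k$-automatic words it is exactly Rigo's open conjecture, so no soft desubstitution-plus-bounded-boundary argument of the kind you sketch can suffice. Your appeal to Theorem~\ref{rf} also misses the target: the extension bounds there grow with $|w|$ (up to $2^{\lfloor \log_2(n-2)\rfloor+2}-n$), so the ``boundary data'' is not confined to a finite set in the way your step (4) needs.

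What the paper does instead, and what your plan lacks, is a drastic compression followed by an exact structural recursion. First, Lemma~\ref{vec} shows the $2$-abelian class of $w$ is determined by the triple $(w_0, p(w), r(w))$ --- one unbounded integer plus two bits --- using the odd reading frame, in which $00$ and $11$ alternate (Lemma~\ref{alt}). Second, and this is the heart: the set $\mathbf{pairs}(n)$ of achievable values of $p(w)$ is always an integer \emph{interval} (Lemma~\ref{mvs}) satisfying the exact affine recursions of Theorem~\ref{pairs}, in particular $\mathbf{PAIRS}(2n)=n-\mathbf{pairs}(n+1)$, obtained from the desubstitution you correctly identify but with the pair count transforming as $p(w')=n-p(w)$ rather than linearly in any Parikh vector. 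Then Lemma~\ref{conclass} expresses $\mathcal{P}_n$ purely through the cardinalities and even-element counts of these intervals, reducing $2$-regularity to $13$ explicit linear identities among the $\mathcal{P}_{2^q n+r}$ covering all residues modulo $32$, each verified by interval arithmetic. Without identifying the interval structure and the reflection $p\mapsto n-p$, your step (3) cannot produce the matrices $M_0,M_1$; and note that invoking Theorem~\ref{pal} would invert the paper's logical order, since its proof uses Relation~1 of the kernel relations (though the independent Parreau--Rigo--Vandomme proof does run through palindromicity, it too must first establish that structure rather than assume it).
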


To understand this theorem let us take a look at the  first concept in Theorem~\ref{main}: the $\ell$-abelian complexity.

 The $\ell$-abelian complexity is a complexity measure, which was first introduced in 1981 by Karhum\"aki \cite{Karhumaeki1981}. 
  The $\ell$-abelian complexity 
$\mathcal{P}_w^{(\ell)}(n)$ of an infinite word $w$ builds a bridge between the  abelian complexity which corresponds to $\ell=1$  and the factor complexity 
 which corresponds to $\ell=+\infty$ 
and allows
a finer resolution. The abelian complexity $\mathcal{P}_w^{(1)}(n)$ of an infinite word counts the anagrams of length $n$ while the
factor complexity $\mathcal{P}_w^{(\infty)}(n)$ counts the factors in $T_n$.
 The abelian complexity sequence of the infinite Thue--Morse word  is
 $(\mathcal{P}_t^{(1)}(n))_{n\geq 0}=1,(2,3)^\omega$.
The  factor complexity  \cite{Brlek} of the Thue--Morse word $t$  is well known
\[\mathcal{P}_t^{(\infty)}(0)=1, \quad \mathcal{P}_t^{(\infty)}(1)=2, \quad \mathcal{P}_t^{(\infty)}(2)=4, \]
\[\mathcal{P}_t^{(\infty)}(n)=\begin{cases}4n-2\cdot 2^m-4, \text{ if } 2\cdot 2^m <n\leq 3\cdot 2^m; \\
  2n+4\cdot 2^m-2, \text{ if } 3\cdot 2^m <n\leq 4\cdot 2^m. 
  \end{cases}
\]

Before we define  $\ell$-abelian complexity we need some vocabulary. For a word $w=w_0w_1\cdots w_n$ 
the prefix of length $\ell$ is defined as $\pf_\ell(w):=w_0\cdots w_{\ell-1}$ while the suffix of length $\ell$
is $\suf_\ell(w):=w_{n-\ell+1}\cdots w_{n}$.

We write $|w|$ to denote the \emph{length} of a word $w$. If $v$ is a factor of $w$ the number of occurrences of $v$ in $w$ is denoted by $|w|_v$. We write
$\mathbb{N}_0$ for the natural numbers, including $0$.

\begin{de} \label{kabel}
 For an integer $\ell \geq 1$, two words $u,v\in A^*$, for some alphabet $A$, are \emph{$\ell$-abelian equivalent} if
 \begin{enumerate}
  \item $\pf_ {\ell-1}(u)=\pf_ {\ell-1}(v)$ and $\suf_ {\ell-1}(u)=\suf_ {\ell-1}(v)$, and \label{rrr}
  \item for all $w \in A^*$ with $|w|=\ell$ the number of occurrences of $w$ in $u$ and $v$ is equal, i.e. $|u|_w=|v|_w$.
 \end{enumerate}
We then write $u\equiv_\ell v$. 
\end{de}

There are several equivalent definitions of $\ell$-abelian equivalence (cf. \cite{Karhumaki2013a}), we use the one from \cite{EliseLeiden}. Note that the given
definition is not minimal, it would suffice to use either $\pf_ {\ell-1}(v)$ or  $\suf_ {\ell-1}(v)$.

It is easy to check that $\ell$-abelian equivalence is indeed an equivalence relation. The first part of the definition, where we fix the prefix and suffix,  guarantees that two $\ell$-abelian equivalent words are also
$(\ell-1)$-abelian equivalent.

\begin{ex}
 Let us take two words $w=001011$ and $v=001101$. We see that $w\equiv_2 v$ since $|w|_{00}=1$, $|w|_{01}=2$, $|w|_{10}=1$, $|w|_{11}=1$ and we get the same values for $v$. 
Furthermore both words have the same prefix and suffix. On the other hand $w\not\equiv_3 v$ since
 $|001011|_{010}=1$ and  $|001101|_{010}=0$. Also the suffixes differ, $11\not=01$.
\end{ex}
 
 Since $\equiv_\ell$ is an equivalence relation it is natural to count equivalence classes.
 
We are  interested in the number of $2$-abelian equivalence classes for words of a given length:
 
 \[\mathcal{P}_t^{(2)}(n):= \#(T_n/_{\equiv_2}).\]
  Usually we write $\mathcal{P}_w^{(\ell)}(n)$  to denote the number of $\ell$-abelian equivalence classes of factors of $w$ of length $n$, where $w$ is an infinite word. 
 In rest of this paper, we will only consider  the $2$-abelian complexity of the  Thue--Morse word $t$. Therefore we will use the simpler notation 
$\mathcal{P}_n:=\mathcal{P}_t^{(2)}(n)$.

 The sequence starts with 
 \begin{equation*}
  \begin{split}
(\mathcal{P}_n)_{n\geq 0}=1,2,4,6,8,6,8,10,8,6,8,8,10,10,10,8,8,6,8,10,10,8,10,12,12, \\
10,12,12,10,8,10,10,8,6,8,8,10,10,12,12,10,8,10,12,14,12,12,12,12,\dots.
 \end{split}
\end{equation*}

 \begin{de} \label{con}
  We assign to every word $w$  its equivalence class. To denote the $2$-abelian equivalence class of a word $w$ we use a $6$-tuple.
\[ \class\colon  T \rightarrow \mathbb{N}_0^4 \times \{0,1\}^2,\enskip w \mapsto
(|w|_{00}, |w|_{01}, |w|_{10}, |w|_{11}, w_0, w_n).\]
 \end{de}

       \begin{ex}\label{rr}
        We have $\class(w)=(1,2,3,1,1,0)$ for $w=10011010$.
       \end{ex}
       
Karhum\"aki, Saarela and Zamboni showed in \cite{Karhumaeki2013} that for $n\geq 1$, $m\geq 0$ we have
\[\mathcal{P}_t^{(2)}(n)=O(\log n), \quad \mathcal{P}_t^{(2)}((2\cdot 4^m+4)/3)=\Theta(m) \quad \text{ and }\mathcal{P}_t^{(2)}(2^m+1)\leq 8.\]
Actually, $\mathcal{P}_t^{(2)}(2^m+1)=6$, for $m\geq 1$.

Theorem~\ref{main} combines two concepts: $\ell$-abelian complexity and $k$-regular sequences.
We just treated $\ell$-abelian complexity, let  us now look at $k$-regular sequences.

Allouche and Shallit introduced $k$-regular sequences in 1990 \cite{1990}. It is a well-known theorem by Eilenberg
\cite{Eilenberg} that a sequence is $k$-automatic if and only if its $k$-kernel is finite.

\begin{de}
 Let $k \geq 2$ be an integer. The \emph{$k$-kernel} of a sequence $(a(n))_{n\geq 0}$ is the set of subsequences
 \[\{(a(k^en+c))_{n\geq 0}\mid e\geq0, \enskip 0\leq c< k^e\}.\]
\end{de}
 For example, the Thue-Morse sequence is $2$-automatic.
Allouche and Shallit \cite{1990} took this characterization of $k$-automatic sequences via the kernel and extended it to $k$-regular sequences.
\begin{de}[Allouche and Shallit]
Let $k\geq 2$ be an integer. An integer sequence $(a(n))_{n\geq 0}$ is \emph{$k$-regular} if the $\mathbb{Z}$-module generated by its $k$-kernel is finitely generated.
\end{de}

Just recently research begun to investigate the regularity of the abelian complexity. Madill and Rampersad showed that the abelian complexity of the paperfolding
 word is $2$-regular \cite{Madill2013}.

 This article solves
an open conjecture from Elise Vandomme, Aline Parreau and Michel Rigo \cite{EliseLeiden} who conjectured that the $2$-abelian complexity of the infinite Thue--Morse word is 
$2$-regular. This is a special case of a more general conjecture by Rigo.

\begin{conj}
The $2$-abelian complexity of any $k$-automatic word is an $k$\hbox{-}regular sequence.
\end{conj}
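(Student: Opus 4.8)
The plan is to reduce the general statement to a linear-algebraic analysis of how the relevant $2$-abelian data transforms under desubstitution, and then to control the resulting counting function along base-$k$ expansions. First I would invoke the standard structure theorem (Cobham) for $k$-automatic words: every $k$-automatic word $w$ is the image $w=\tau(x)$ under a coding $\tau$ of a fixed point $x=\sigma(x)$ of a $k$-uniform morphism $\sigma$ over a finite alphabet $A$. Since $\tau$ is letter-to-letter, it induces a fixed linear map on digram-count vectors, $|\tau(u)|_{cd}=\sum_{a,b:\,\tau(a)=c,\,\tau(b)=d}|u|_{ab}$, so it suffices to understand the $2$-abelian data of factors of the uniform fixed point $x$ and then push it through $\tau$ together with the bounded endpoint information.

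The key structural input is that the $2$-abelian class of a factor is encoded by a finite tuple generalizing $\class$: the digram vector $P_2(u)=(|u|_{ab})_{a,b\in A}\in\mathbb{N}_0^{|A|^2}$ together with the first and last letters of $u$ (the ordinary letter counts are recovered from $P_2(u)$ and the endpoints, so nothing is lost). The crucial observation is that this data transforms \emph{affinely} under $\sigma$: since $\sigma$ is uniform, each letter contributes a fixed block of internal digrams, while each junction between consecutive images $\sigma(a)\sigma(b)$ contributes the single digram formed by the last letter of $\sigma(a)$ and the first letter of $\sigma(b)$. Because the number of junctions of type $(a,b)$ in $\sigma(v)$ equals $|v|_{ab}$, one obtains an identity of the shape
\[
P_2(\sigma(v))=M\,P_1(v)+J\,P_2(v),
\]
with $M,J$ fixed matrices depending only on $\sigma$; combined with the recovery of $P_1(v)$ this makes $P_2(\sigma(v))$ a fixed affine function of the extended data of $v$. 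Next I would set up the recursion on lengths: every factor $u$ of $x$ of length $n$ desubstitutes as $u=s\,\sigma(v)\,p$, where $s$ is a bounded suffix of some $\sigma(a)$, $p$ a bounded prefix of some $\sigma(b)$, and $v$ is a factor of length $\lfloor n/k\rfloor$ or $\lceil n/k\rceil$. Prepending $s$ and appending $p$ alters $P_2$ and the endpoints by a bounded, explicitly determined correction. Hence the set $V_n$ of $2$-abelian classes at length $n$ is a union, over the finitely many boundary configurations $(s,p)$, of affine images of $V_{\lfloor n/k\rfloor}$ and $V_{\lceil n/k\rceil}$, with $\mathcal{P}^{(2)}_w(n)=\#V_n$. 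This is precisely the recursive base-$k$ description one needs.

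The main obstacle — where the real work lies — is passing from this set-level recursion to a linear recurrence for the cardinalities $\#V_n$. Taking cardinalities of a union of affine images is not linear: different classes at the lower level, or different boundary configurations, may collide onto the same class at level $n$, so $\#V_n$ is governed by an inclusion–exclusion over these overlaps rather than by a clean linear combination of $\#V_{\lfloor n/k\rfloor}$ and $\#V_{\lceil n/k\rceil}$. The heart of the proof must therefore be a structural result showing that the \emph{collision pattern} is finite-state: that the overlaps among the affine images stabilize and are themselves described by a finite automaton reading the base-$k$ expansion of $n$, so that the corrections are eventually periodic (or at least lie in a finitely generated module). This is exactly the content carried out explicitly for Thue--Morse in Theorem~\ref{main} and reflected in the palindrome structure of Theorem~\ref{pal}; the general conjecture requires proving that such a finite collision description \emph{always} exists. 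I expect one can organize this by tracking, instead of the classes themselves, the finitely many ``difference profiles'' between competing affine images, showing these profiles take finitely many values and evolve according to a finite transducer on base-$k$ digits, and then concluding $k$-regularity from the standard kernel criterion.
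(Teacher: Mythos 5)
You are attempting a statement that the paper itself records as an \emph{open conjecture} (attributed to Rigo): the paper proves only the Thue--Morse case (Theorem~\ref{main}) and explicitly leaves the general statement unresolved, so there is no proof in the paper to compare yours against --- and, by your own admission, your text is not a proof either. The preparatory reductions you carry out are standard and correct: Cobham normal form, the affine transformation law $P_2(\sigma(v))=M\,P_1(v)+J\,P_2(v)$ for digram vectors under a $k$-uniform morphism, recovery of letter counts from digram counts plus the final letter, and the desubstitution $u=s\,\sigma(v)\,p$ with bounded $s,p$. But these reduce the conjecture to itself: the kernel criterion for $k$-regularity demands exactly the finitely generated module structure that you postulate in your final paragraph. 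The decisive step --- that the collision pattern among the affine images is finite-state, i.e.\ that your ``difference profiles'' take finitely many values and evolve under a finite transducer on base-$k$ digits --- is the entire content of the conjecture, and you offer only ``I expect one can organize this by\dots''. That is a genuine gap, not a routine verification left to the reader.

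Two concrete points sharpen this. First, an unaddressed admissibility issue: the set $V_n$ of $2$-abelian classes at length $n$ is \emph{not} the union over boundary configurations $(s,p)$ of affine images of $V_{\lfloor n/k\rfloor}$ and $V_{\lceil n/k\rceil}$, because two $2$-abelian-equivalent factors $v$ need not admit the same desubstitution boundaries; taking the full union overcounts, so the recursion must run on a refinement of the classes by bounded boundary context, which you never set up (fixable, but missing). Second, what makes the special case tractable in the paper is a collapse with no stated general analogue: Lemma~\ref{vec} compresses the class data to a single integer $p(w)$ plus two bits, Theorem~\ref{pairs} gives an exact recursion for the value sets $\mathbf{pairs}(n)$ and $\mathbf{PAIRS}(2n)$, and Lemma~\ref{mvs} shows these sets are integer \emph{intervals}, so the ``collision pattern'' is governed by two endpoint sequences moving in steps of $0$ or $1$ --- and even with all of that rigidity the paper still needs thirteen case-by-case verified identities plus Lemma~\ref{conclass} to conclude. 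Your approach would have to reproduce comparable rigidity for value sets living in $\mathbb{N}_0^{|A|^2}$ for an arbitrary $k$-uniform morphic word, and nothing in your sketch supplies a reason such structure exists in general.
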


Shortly after the discovery of our proof, they found an independent proof \cite{Rigo} of their own,
which uses the palindromic structure of the sequence.

This paper is  organized as follows: 

After some definitions in the rest of this section we will introduce \emph{reading frames} in Section~\ref{frames}. Reading frames are a  factorization of words into 
factors $v_i$ of the form $v_i=m^q(0)$ or $v_i=m^q(1)$ for some $q\in\mathbb{N}$, plus a prefix and suffix of shorter length.
Reading frames are a  natural
way to think about the Thue--Morse word since they preserve the morphism structure.

We use these reading frames in Section~\ref{merf}  to prove Theorem~\ref{rf} about unique extensions of Thue--Morse factors.
For a factor $w$ in $t$ there is sometimes only one possibility
for the next (or previous) letters $x_1\cdots x_n$ so that $wx_1\cdots x_n$ (or $x_1\cdots x_nw$) is again a factor of $t$. We give lower and upper bounds 
for the lengths of such unique extensions. Section~\ref{merf}  can be skipped if one is only interested
in the proof of the $2$-regularity.

In  Definition \ref{con} we needed $6$ values to describe  the  $2$-abelian equivalence class of a factor $w$, $4$~binary values and $2$~integer values.
We
introduce the \emph{odd frame} in Section~\ref{obf} in order to simplify this $6$-tuple of values.
The odd frame is a shifted reading frame, which  does not preserve the morphism structure but allows 
to use only $3$ values to represent the $2$-abelian equivalence
class of a factor $w$ with $2$ binary values and $1$ integer value. Only the possible values of the integer $p(w)$, the possible numbers of pairs in a factor $w$, are
nontrivial to determine.
  
Beside the odd frame we also introduce a \emph{short coding}. The short coding is a way to  encode words  in the odd frame so that the numbers of pairs in a factor $w$ can
be seen on the first view.

  In Section~\ref{pa} we use the properties
of pure odd words to prove a recursion (Theorem~\ref{pairs}) on two types of sets, where 
$\mathbf{pairs}(n)$ is the set of all  possible values of $p(w)$ for factors $w$ of length $n$. Once we have the recursion we can use it to determine $\mathcal{P}_t^{(2)}(n)$
for all $n$  (Theorem~\ref{conclass}). These two theorems are used in all further proofs.

Equipped with Theorem~\ref{pairs} and Lemma~\ref{conclass} we prove the main Theorem~\ref{main}  in Section~\ref{2reg} by showing $13$ linear relations given in Theorem 1.1.*.
For each of the $13$ relations the calculations are similar but, since we have to look at three cases
for each of them, a bit lengthy.

Finally we show  some additional  properties of $\mathcal{P}_t^{(2)}(n)$ in Section~\ref{eigen}, most notably
that $\mathcal{P}_t^{(2)}(n)$ is 
a concatenation of longer and longer palindromes. Again we use Theorem~\ref{pairs} and Theorem~\ref{conclass} to do this. We also show that $\mathcal{P}_t^{(2)}(n)$ is unbounded.

Before we continue with the proof we will need some definitions. We will use the fact that $t$ is overlap-free (cf. \cite{automatic}, Theorem 1.6.1). An \emph{overlap} is a word of the
form $xwxwx$, where $w$ is a word, possibly empty,
and $x$ is a single letter. The word $t$ is also cubefree, i.e. it contains no word of the form $www$ where $w$ is a nonempty factor.

The Thue--Morse word is defined over the binary alphabet $A=\{0,1\}$. A (literal) \emph{pattern} is a word over the alphabet $E=\{\alpha,\beta\}$.
Furthermore we define the involutive morphisms $\overline{0}:=1$ and $\overline{1}:=0$ and  similarly $\overline{\alpha}:=\beta$ and $\overline{\beta}:=\alpha$. If $w$ is the 
word $w=x_1x_2\dots x_n$ we call the word $\overline{w}:=\overline{x}_1\overline{x}_2\dots\overline{x}_n$ the \emph{complement} of $w$. An \emph{assignment} of a pattern
is the image of a pattern under a bijective function from $E$ to $A$. A pattern $p$ and a word $w$ are \emph{equal} ($p=w$) if $p=w$ for one assignment of $p$.
We introduce patterns  to avoid case distinctions for complementary words.
\begin{ex}
 The pattern $\alpha\overline{\alpha}\alpha\overline{\alpha}\overline{\alpha}$ can stand for $01011$ or $10100$ depending on the assignment of $\alpha$. So does $\overline{\alpha}\alpha\overline{\alpha}\alpha\alpha$.
\end{ex}

If we want a pattern out of a word, we do this via the morphism
\[ \pat\colon 0 \mapsto \alpha,\, 1 \mapsto \overline{\alpha}.\]

 From now on we will just write \emph{word} if we mean a finite factor of $t$. 
 
\section{Reading frames}\label{frames}

From its definition via the morphism $m$ it is clear that the Thue--Morse word is composed of copies of its first $2^q$ letters,  $q\in\mathbb{N}$, and their complements. 
To denote the special role
of these words we define $f_{2^q}:=t_0 t_1 \cdots t_{2^q-1}$.
\begin{ex}
 We take $q=2$ and get $\pat(f_{2^2})=\alpha\overline{\alpha}\overline{\alpha}\alpha$ which gives us
 \[t=01101001100101101001011\cdots=\hb{0110}\hb{1001}\hb{1001}\hb{0110}\hb{1001}011\cdots\]
\end{ex}
A property that will be  useful later is that the word $f_{2^q}$ has the image $m(f_{2^q})=f_{2^{q+1}}$ and the preimage $m^{-1}(f_{2^q})=f_{2^{q-1}}$.

\begin{de}
A \emph{$2^q$-reading frame} of a word $w\in T$ is a factorization of $w$ into  words $w=pv_1\cdots v_ms$, where $v_1,\cdots,v_m$ are words of length $2^q$ plus a 
prefix $p$ and a  suffix $s$ with $|p|,|s|<2^q$, so that $v_i$ is a word with $v_i=\pat(f_{2^q})$, for the prefix $p$ we have $p=\pat(\suf_{|p|}(f_{2^q}))$ 
and  for the suffix $s$ we have accordingly  $s=\pat(\pf_{|s|}(f_{2^q}))$. 
\end{de}
 We call $p,s$ and the $v_i$ \emph{frame words}, especially
the $v_i$ are called \emph{complete frame words}.
The $1$-reading frame is called the \emph{trivial frame}. A word $w\in T$ may have several different $2^q$-reading frames but at most $2^q$. If we shift a $2^q$\hbox{-}reading frame
one letter to the right we are in a new reading frame and after $2^q$ shifts we are in the original reading frame again. If there is only one $2^q$-reading frame it is called 
the \emph{extensible} reading frame.

We said in the beginning of this section that $t$ is composed of copies of its first $2^q$ letters and their complements. So the infinite Thue--Morse word $t$
has a $2^q$-reading frame for every $q$. Since any word $w$ in $T$ is a factor of $t$ it can be read in the same reading frame as $t$. And if
$w$ has only one $2^q$-reading frame it has to be the $2^q$-reading frame of $t$.

We can can get the previous and next letters of $t$ if we fill up prefix and suffix to complete frame words in the $2^q$-reading 
frame. We will  use a $\deemph{gray}$ font for filled up letters. Since $t$ is infinite the extensible $2^q$-reading frame of $w$ can be extended to arbitrary length (possibly
in different ways) but the filled up letters are unambiguous. So if the filled up letters in the prefix and suffix  give a word which is not in $T$ we can not be in the extensible reading frame.

If there is an extensible $2^q$-reading frame, we call the $2^{q-1}$-reading frame that we get by splitting every complete $2^q$-reading frame word into two complete $2^{q-1}$-reading frame words, extensible
too. 

\begin{ex}\label{frame}
 The word $0101$ has two $2$-reading frames: $\hb{01}\hb{01}$ and $\lhb{0}\hb{10}\rhb{1}$, but the extensible $2$-reading frame is
 $\hb{01}\hb{01}$.
 We can not extend the $\lhb{0}\hb{10}\rhb{1}$
 reading frame since if we fill up the letters we  get $\hb{\deemph{1} 0}\hb{10}\hb{1\deemph{0}}$, but $t$ is cubefree.
 
 And $0101$ has a unique $4$-reading frame: $\lhb{01}\rhb{01}$. We can get the $4$-reading frame by merging two $2$-reading frame words.  Since every complete $4$-reading 
 frame word has the pattern $\alpha\overline{\alpha\alpha}\alpha$ it can not be $\hb{0101}$ so the extensible $4$-reading frame is $\hb{\deemph{10}01}\hb{01\deemph{10}}$. Thus $0101$ is extensible.
\end{ex}

Let us state some of the previous comments explicitly as lemma.

\begin{lem}
 A word $w\in T$ has a $2^q$-reading frame for every $q\in\mathbb{N}_0$.
\end{lem}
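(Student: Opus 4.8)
The plan is to reduce the statement to the corresponding structural fact about the infinite word $t$ and then transport a reading frame of $t$ to an arbitrary finite factor by restriction. This is essentially a careful formalization of the two informal remarks preceding the lemma, namely that $t$ itself carries a $2^q$-reading frame and that any $w\in T$ can be read in the same frame as $t$.

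First I would establish that $t$ decomposes into consecutive blocks of length $2^q$, each matching the pattern $\pat(f_{2^q})$. This follows from the fixed-point property $m(t)=t$: iterating gives $m^q(t)=t$, and since $m^q$ acts letterwise we have $t=m^q(t_0)m^q(t_1)m^q(t_2)\cdots$. Here $m^q(0)=f_{2^q}$, because $m^q(0)$ is the length-$2^q$ prefix of $t$, and $m^q(1)=\overline{f_{2^q}}$ by an easy induction on $q$ using $m(1)=\overline{m(0)}$ and the fact that $m$ commutes with complementation. Hence each block $m^q(t_i)$ equals $f_{2^q}$ or its complement, i.e. equals $\pat(f_{2^q})$ under the assignment sending $\alpha\mapsto 0$ when $t_i=0$ and $\alpha\mapsto 1$ when $t_i=1$.

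Next, given $w\in T$, I would fix an occurrence $w=t_a t_{a+1}\cdots t_{a+|w|-1}$ and write $a=2^q k+r$ with $0\le r<2^q$, and then cut the block decomposition of $t$ along the window $[a,a+|w|)$. The part of block $k$ lying inside the window is the suffix of length $2^q-r$ of $\pat(f_{2^q})$, which is $\pat(\suf_{2^q-r}(f_{2^q}))$ and serves as the prefix $p$ (empty when $r=0$). The blocks lying entirely inside the window become the complete frame words $v_i=\pat(f_{2^q})$, and the final partial block, being an initial segment of $\pat(f_{2^q})$, yields $s=\pat(\pf_{|s|}(f_{2^q}))$. Since the pattern assignment on each block is inherited from $t$ on the whole window, the factorization $w=pv_1\cdots v_m s$ with $|p|,|s|<2^q$ meets every condition in the definition of a $2^q$-reading frame.

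The only care needed is the index bookkeeping at the two boundaries — confirming that the leftover at the left end is genuinely a suffix of $f_{2^q}$ and the leftover at the right end a prefix — together with the degenerate cases in which $w$ lies inside a single block or in which $p$ or $s$ is empty. For $q=0$ the statement is immediate, since $\pat(f_1)=\alpha$ matches every letter and the trivial frame works. No nontrivial obstacle arises; the argument is a direct unwinding of the fixed-point decomposition of $t$.
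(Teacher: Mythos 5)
Your route is the paper's route: the paper proves this lemma in two sentences by the same restriction idea ($w$ occurs in $t$, $t$ carries a $2^q$-reading frame, hence so does $w$), and your block decomposition $t=m^q(t_0)m^q(t_1)\cdots$ is the correct formalization of it. But the case you dismiss as needing only ``index bookkeeping'' is a genuine gap, and it is not a corner case: once you fix an occurrence of $w$ at position $a$, then for \emph{every} $q$ with $2^q>a+|w|$ the window $[a,a+|w|)$ contains no multiple of $2^q$, so $w$ lies strictly inside a single block and your cutting construction produces nothing of the required form $pv_1\cdots v_ms$ --- the leftover is a \emph{middle} factor of $\pat(f_{2^q})$, which in general matches neither a suffix pattern nor a prefix pattern. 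Concretely, $w=110100$ occurs at offset $1$ inside $f_8=01101001$; it matches neither $\pat(\suf_6(f_8))=\pat(101001)$ nor $\pat(\pf_6(f_8))=\pat(011010)$, so restricting that occurrence yields no valid $8$-reading frame. A valid frame does exist, namely $\lhb{110}\rhb{100}$ with $p=110$, $s=100$, $m=0$, but it comes from a cut at the \emph{midpoint} of the block, not from the block boundaries of $t$ at that occurrence. Choosing a different occurrence does not obviously rescue you either: uniform recurrence gives occurrences with bounded gaps, but it does not guarantee an occurrence whose window meets a multiple of $2^q$ for every $q$.

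The missing ingredient is the following fact, provable by a short induction on $q$: every factor of $f_{2^q}$ (or of $\overline{f_{2^q}}$) of length at most $2^q$ can be written as $ps$ with $p=\pat(\suf_{|p|}(f_{2^q}))$ and $s=\pat(\pf_{|s|}(f_{2^q}))$, one of the parts possibly empty. Indeed, writing $f_{2^q}=f_{2^{q-1}}\overline{f_{2^{q-1}}}$, a factor straddling the midpoint is a suffix of the first half followed by a prefix of the second half, and these match the required patterns because $\pf_\ell(f_{2^q})=\pf_\ell(f_{2^{q-1}})$ and $\suf_\ell(f_{2^q})=\overline{\suf_\ell(f_{2^{q-1}})}$ for $\ell\le 2^{q-1}$ while patterns are invariant under complementation; a factor lying in one half is handled by the induction hypothesis together with the same two identities (factors of $\overline{f_{2^{q-1}}}$ are complements of factors of $f_{2^{q-1}}$). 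With this supplement, your restriction argument covers the windows that meet a cut point and the induction covers the rest, completing the proof --- and it supplies exactly the detail that the paper's own one-sentence proof also leaves implicit.
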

\begin{proof}
 By definition $w$ occurs somewhere in $t$. But $t$ can be read in a $2^q$-reading frame for every $q$ therefore $w$ can be read in a $2^q$-reading frame too.
\end{proof}
\begin{cor}
 A word $w$ is not in $T$ if there exists an integer $q$, so that $w$ has no $2^q$-reading frame.
\end{cor}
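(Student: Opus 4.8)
The plan is to observe that this corollary is nothing more than the contrapositive of the lemma that immediately precedes it, so I would prove it by that single logical move rather than by any fresh combinatorial argument on the Thue--Morse word. The lemma asserts the universal statement that every $w\in T$ admits a $2^q$-reading frame for \emph{every} $q\in\mathbb{N}_0$. The corollary is the negation of the consequent forcing the negation of the antecedent: if there is even one exponent $q$ for which $w$ fails to have a $2^q$-reading frame, then $w$ cannot satisfy the lemma's conclusion, and hence $w$ cannot be a factor of $t$.

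Concretely, I would argue by contraposition. Suppose toward a contradiction that $w\in T$ yet there exists an integer $q$ such that $w$ has no $2^q$-reading frame. By the lemma, membership $w\in T$ guarantees a $2^q$-reading frame for that very $q$, contradicting our assumption. Therefore $w\notin T$. This is the entire content of the proof, and it requires no appeal to overlap-freeness, cubefreeness, or the morphism structure beyond what is already packaged inside the lemma.

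Since the argument is purely formal, there is no substantive obstacle to overcome; the only thing worth stating explicitly is that the quantifier in the lemma is ``for every $q$,'' whereas the hypothesis of the corollary is an existential ``there exists an integer $q$.'' It is precisely this quantifier duality that makes the contrapositive go through: a single witnessing exponent $q$ with no admissible frame suffices to exclude $w$ from $T$. In practice this corollary is the useful direction for applications, because it turns the abstract nonexistence of a reading frame into a concrete certificate that a candidate word is not a Thue--Morse factor, and I would flag it as the form of the statement that later sections will invoke.
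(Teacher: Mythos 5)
Your proof is correct and matches the paper exactly: the paper states this corollary without a separate proof precisely because it is the contrapositive of the preceding lemma, which is the single logical step you identify. Your explicit remark on the quantifier duality (a single witnessing $q$ suffices) is a fair clarification of what the paper leaves implicit.
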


\begin{ex}
 The words assigned to the patterns $\alpha\alpha\alpha$, $\alpha\alpha\overline{\alpha}\alpha\alpha$ and  $\alpha\overline{\alpha}\alpha\overline{\alpha}\alpha$ are not 
 in $T$ since $t$ is overlap-free. We can also prove this with reading frames.
 The the words assigned to the patterns $\alpha\alpha\alpha$ and $\alpha\alpha\overline{\alpha}\alpha\alpha$ are not in $T$ since they have no $2$-reading frame. 
 The the words assigned to the pattern $\alpha\overline{\alpha}\alpha\overline{\alpha}\alpha$ are not in $T$ since they have no $4$-reading frame.

\end{ex}

\section{Maximal extensible reading frames}\label{merf}
 An  \emph{extension} of a word $w \in T$ is a pair of words $(v_1,v_2)$ with 
$v_1,v_2\in T$, $|v_1|+|v_2|>0$ so that $v_1wv_2 \in T$. An extension $(v_1,v_2)$ is \emph{unique} if for all pairs $(y_1,y_2)\in\{0,1\}^*\times \{0,1\}^*$ with
$|v_1|=|y_1|, |v_2|=|y_2|$ and $(v_1,v_2)\not=(y_1,y_2)$ it follows  $y_1wy_2 \not\in T$.

We already saw that we can get unique extensions of a word $w \in T $ if we fill up prefix and suffix to complete frame words in the extensible reading frame. In this
section we will prove that we get all unique extensions of a word $w\in T$ by filling up  prefix and suffix in a certain $2^q$-reading frame.

There is a \emph{maximal extensible reading frame} (abbreviated as MERF), since if $w$ is a factor of $f_{2^q}$ it can not uniquely determine
a $2^{q+1}$-reading frame.
\begin{ex}
 The word $w=011$  has the extensible $2$-reading frame $\hb{01}\hb{1\deemph{0}}$. The $2$-reading frame is the MERF since there are two possible $4$-reading frames;
$w$ is a factor of $\hb{0110}$ and $w$ is also a factor of $\hb{1001}\hb{1001}$.
\end{ex}

For small cases it is easy to determine the maximal extensible reading frame by hand, while longer words can be reduced to the short cases as preimages
under the morphism $m$.

\begin{table}[ht]
\begin{center}
 \begin{tabular}{cc}
\toprule
 Pattern & MERF   \\
\midrule
$\hb{\alpha}$ & $1$-reading frame \\
$\lhb{\alpha}\rhb{\alpha}$ & $2$-reading frame \\
$\hb{\alpha}\hb{\overline{\alpha}}$& $1$-reading frame  \\
$\lhb{\alpha}\hb{\alpha\overline{\alpha}}$& $2$-reading frame \\
$\hb{\alpha}\hb{\overline{\alpha}}\hb{\alpha}$ & $1$-reading frame\\
$\hb{\alpha\overline{\alpha}}\rhb{\overline{\alpha}}$ & $2$-reading frame\\
 \bottomrule
 \end{tabular}
\hspace{3em}
  \begin{tabular}{cccc}
\toprule
Pattern & MERF \\
\midrule
 $\lhb{\alpha\alpha\overline{\alpha}} \rhb{\alpha}$& $4$-reading frame\\
  $\lhb{\alpha}\hb{\alpha\overline{\alpha}} \rhb{\overline{\alpha}}$ & $2$-reading frame\\
 $\lhb{\alpha}\rhb{\overline{\alpha}\alpha\alpha}$ &$4$-reading frame\\
$\lhb{\alpha\overline{\alpha}}\rhb{\alpha\overline{\alpha}}$ &$4$-reading frame\\
$\hb{\alpha\overline{\alpha}}\hb{\overline{\alpha}\alpha}$ &$2$-reading frame\\
\\
 \bottomrule
 \end{tabular}
 \caption{MERFs for all nonempty words in $T$ up to length $4$.}\label{lookup}
\end{center}
\end{table}
The extensible reading frame of a factor $v$ of $w$ also determines the extensible reading frame of $w$. Therefore every word
in $T$ of length at least $4$ has an extensible $2$-reading frame. We can now formulate an algorithm to determine the 
MERF of a word $w$. 

The algorithm determines the extensible $2$-reading frame of the word and fills 
 up the prefix and the suffix of $w$ to complete frame words, then takes the preimage of the new word and repeats those steps till  it reaches a word with no extensible
 $2$-reading frame. In every step the reading frame size doubles and the algorithm will need $q$ steps if the MERF has size $2^q$.
 
 In every step there will be at most two new letters before the word size is halved. So the words will get shorter in every step until they have a length of $4$ or shorter.
 Since the algorithm terminates for all words in Table~\ref{lookup} it will terminate in general.
 
\begin{algorithm}
 \caption{Determines the MERF of a word $w\in T$ and fills the MERF}
 \begin{algorithmic}
   \STATE $\textbf{procedure: } MERF(w)$
   \STATE $q \leftarrow 0$
   \STATE $w' \leftarrow w$
   \WHILE{ $w'$ has a nontrivial reading frame }
   \STATE $q \leftarrow q+1$
   \STATE $w' \leftarrow \textbf{FillFrame}(w')$ \COMMENT{Determines and fills the extensible $2$-reading frame}
      \STATE $w' \leftarrow m^{-1}(w')$
   \ENDWHILE
   \RETURN $m^q(w'),\enskip 2^q\text{ ``-reading frame.''}$
 \end{algorithmic}
\end{algorithm}

To decide whether $w$ has a nontrivial reading frame we can use Table~\ref{lookup} as lookup table, since there are only $6$ words ($3$ patterns) with 
a trivial reading frame. For $\textbf{FillFrame}(w')$ we use the same lookup table at the first $4$ letters of $w'$ to determine the $2$-reading frame, then we find the frame prefix and
suffix and fill them up. The original word $w$ will occur only once as factor in $m^q(w')$. Let us look at an example.

\begin{ex}
What is $\MERF(0110010)$?
\newline
We start with $w=\hb{01}\hb{10}\hb{01}\rhb{0}$ and $q=0$. Then we enter the while loop  and get
  \[q=1,\enskip FillFrame(w')=\hb{01}\hb{10}\hb{01}\hb{0\deemph{1}}, \enskip m^{-1}(w')=\lhb{0}\hb{10}\rhb{0},\] 
\[q=2,\enskip FillFrame(w')=\hb{\deemph{1}0}\hb{10}\hb{0\deemph{1}},{\color{white}\hb{00}}\enskip m^{-1}(w')=\lhb{1}\hb{10},{\color{white}\rhb{0}}\]
\[q=3,\enskip FillFrame(w')=\hb{\deemph{0}1}\hb{10},{\color{white}\hb{00}}{\color{white}\hb{00}} \enskip m^{-1}(w')=\hb{0}\hb{1}.{\color{white}\hb{00}}\]
Since $\hb{0}\hb{1}$ has a trivial reading frame we leave the while loop and the algorithm returns
$\MERF(0110010)\colon \hb{\deemph{011010}01}\hb{10010\deemph{110}}\text{ $8$-reading frame}$.
 \end{ex}

As a consequence of the algorithm we have the following lemma:
\begin{lem}\label{wl}
A factor $w$ of the Thue--Morse word of length $2^q\leq|w|<2^{q+1}$ has an extensible $2^{q-1}$-reading frame.
\end{lem}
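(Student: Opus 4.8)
The plan is to read the lemma directly off the $\MERF$ algorithm: I will show that the counter $q$ returned by the algorithm is at least $q-1$ whenever the input $w$ has length $2^q\le |w| <2^{q+1}$, and then invoke the remark that an extensible $2^Q$-reading frame splits into an extensible $2^{Q-1}$-reading frame. Iterating that splitting shows that a MERF of size $2^Q$ with $Q\ge q-1$ already forces an extensible $2^{q-1}$-reading frame of $w$, which is exactly the claim. (The degenerate cases $q\le 1$ are immediate, since the trivial $1$-reading frame is always the unique, hence extensible, $2^0$-reading frame.)

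The first step is to track how the length of the working word $w'$ changes in one pass of the while loop. Each pass does two things: $\textbf{FillFrame}$ completes the prefix and suffix of $w'$ inside its extensible $2$-reading frame, and then $m^{-1}$ halves the length. Since the prefix and suffix are filled up to complete frame words of length $2$, the filled word $pv_1\cdots v_m s$ has even length, so its preimage under $m$ has exactly half that length; and since filling only \emph{adds} letters, the filled length is at least the old length. Writing $L_i$ for the length of $w'$ after $i$ passes, this gives the key inequality $L_i \ge L_{i-1}/2$, and hence by induction $L_i \ge |w|/2^i \ge 2^{q-i}$.

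The second step combines this lower bound with the entry condition of the loop. Every word of length at least $4$ has an extensible $2$-reading frame (stated just before the algorithm and visible in Table~\ref{lookup}), so the loop is entered whenever $L_{i-1}\ge 4$. From $L_{i-1}\ge 2^{q-i+1}$ we get $L_{i-1}\ge 4$ for every $i\le q-1$, so the loop runs at least $q-1$ times and the returned counter satisfies $Q\ge q-1$. Thus the MERF has size $2^Q\ge 2^{q-1}$, and splitting its complete frame words repeatedly yields the desired extensible $2^{q-1}$-reading frame of $w$.

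The only delicate point is making the length bookkeeping airtight: one must be sure that $\textbf{FillFrame}$ never shortens the word and that the preimage step divides the length by exactly $2$. Both rest on the evenness of the filled word together with the fact that $\textbf{FillFrame}$ works inside the \emph{extensible} $2$-reading frame, so that the filled word lies in $T$ and its $m$-preimage is well defined. Everything else is a direct consequence of the termination analysis already carried out for the algorithm, so I expect this to be the main (but minor) obstacle, with no further case distinctions needed.
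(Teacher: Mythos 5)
Your proof is correct and takes essentially the same approach as the paper: both extract the bound $Q\ge q-1$ on the MERF exponent from the $\MERF$ algorithm and then pass from the extensible $2^Q$-reading frame to an extensible $2^{q-1}$-reading frame by splitting. The only difference is the accounting: you bound the working length forward by $L_i\ge |w|/2^i$ and use that the loop is re-entered while the length stays at least $4$, whereas the paper argues backward from the terminal patterns $\alpha\overline{\alpha}$ and $\alpha\overline{\alpha}\alpha$, so the output has length $2\cdot 2^Q$ or $3\cdot 2^Q$ and is at least as long as the input, giving $3\cdot 2^Q\ge 2^q$ and hence the same conclusion.
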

\begin{proof}The output word of the algorithm is at least as long as the input word.
The while loop of the algorithm will only end if it reaches a word with pattern $\pat(w)=\alpha\overline{\alpha}$ or $\pat(w)=\alpha\overline{\alpha}\alpha$. Hence the output word will have length $2\cdot2^i$ or $3\cdot2^i$ 
for $i\in \mathbb{N}_0$. But $3\cdot 2^i \geq 2^q$ implies $i\geq q-1$.
\end{proof}

Equipped  with the algorithm, we are ready to prove the main theorem of this section. As usual we define $a \Mod b:=a-\lfloor \frac{a}{b} \rfloor b$.
\begin{theorem}\label{rf} Any factor $w$ of the infinite Thue--Morse with a given length $n:=|w|=2^q+r$, where $r<2^q$ uniquely determines at 
least $u_{\min}(n)$ and at most $u_{\max}(n)$ letters where
\[u_{\min}(n):=\begin{cases}
       0 \text{ for } n=1 \\
       0 \text{ for } n=2 \\
       0\text{ for } n=3 \\
       -n \Mod 2^{q-1}\text{ for } n>3
        \end{cases} \hspace{-2.7em}\text{and }  u_{\max}(n):=\begin{cases}
       0 \text{ for } n=1 \\
       2 \text{ for } n=2 \\
       1 \text{ for } n=3 \\
       2^{\lfloor \log_2 (n-2) \rfloor+2} -n \text{ for } n>3 .
        \end{cases}        
\]
This bounds are sharp.
\end{theorem}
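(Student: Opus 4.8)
The plan is to identify, for each factor $w$, the number of letters it uniquely determines with the total \emph{fill} of its maximal extensible reading frame. By the discussion preceding the theorem, filling the prefix and suffix of $w$ to complete frame words in the MERF produces exactly the unique extension, so if the MERF has size $2^Q$ and the filled word has length $L$, then $w$ determines exactly $L-n$ letters (the filled letters are forced, and beyond the MERF the frame is no longer unique, so extensions branch). I would run the MERF algorithm and track the lengths $\ell_0=n,\ell_1,\dots,\ell_Q$ of the successive preimages, where $\ell_Q\in\{2,3\}$ is the terminal word ($\pat=\alpha\overline{\alpha}$ or $\alpha\overline{\alpha}\alpha$). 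Since each step fills at most one letter in front and one at the back before halving, $\ell_{i+1}=(\ell_i+c_i)/2$ with $c_i\in\{0,1,2\}$, and telescoping gives $L=2^Q\ell_Q=n+\sum_{i=0}^{Q-1}2^ic_i$. Thus the determined letters equal $L-n$, and the whole theorem reduces to bounding $L$ for a given $n$.

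For the upper bound I would use the reverse inequality $\ell_i\ge 2\ell_{i+1}-2$ (from $c_i\le 2$) to bound $n$ from below in terms of $Q$ and $\ell_Q$. Iterating from the terminal value yields $n\ge 2^{Q-1}+2$ when $\ell_Q=2$ and $n\ge 2^Q+2$ when $\ell_Q=3$; hence a factor of length $n$ reaches $\ell_Q=2$ only for $Q\le s+1$ and $\ell_Q=3$ only for $Q\le s$, where $s=\lfloor\log_2(n-2)\rfloor$. Maximising $L=2^Q\ell_Q$ over these two possibilities gives $L\le 2^{s+2}$, i.e. at most $2^{s+2}-n$ determined letters. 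I expect the main obstacle here: the naive claim that the pre-terminal length $\ell_{Q-1}$ is at least $3$ is false, since the length-$2$ word with pattern $\alpha\alpha$ (that is $00$ or $11$) has a nontrivial $2$-reading frame, so $\ell_{Q-1}=2$ can occur. I would resolve this by observing that such a pre-terminal forces $\ell_{Q-2}=4$ (its filled preimage must be $0101$ or $1010$, whose length-$2$ and length-$3$ factors are all terminal), which restores $n\ge 2^{Q-1}+2$ for every $Q\ge 2$; the residual case $Q\le 1$ concerns only $n\le 3$, consistent with the separately stated values of $u_{\min},u_{\max}$ at $n=1,2,3$.

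For the lower bound I would argue residue-theoretically. The filled word is a concatenation of complete $2^Q$-frame words, so $L\equiv 0\pmod{2^Q}$ and the number of determined letters $L-n\equiv -n\pmod{2^Q}$; being nonnegative, it is at least $-n\bmod 2^Q$. By Lemma~\ref{wl} every factor of length $n=2^q+r$ has $Q\ge q-1$, and since $2^{q-1}\mid 2^Q$ we get $-n\bmod 2^Q\ge -n\bmod 2^{q-1}$, which is the claimed $u_{\min}(n)$ for $n>3$.

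Finally, sharpness requires exhibiting extremal words. For $u_{\max}$ I would take a factor of $f_{2^{s+2}}=f_{2^{s+1}}\overline{f_{2^{s+1}}}$ of length $n$ straddling the central boundary at position $2^{s+1}$: crossing this junction synchronises and so forces the $2^{s+1}$-reading frame, while being a factor of the single block $f_{2^{s+2}}$ prevents a unique $2^{s+2}$-frame, so the MERF is exactly $2^{s+1}$ and the fill is $2^{s+2}-n$. For $u_{\min}$ I would take a factor aligned to a $2^{q-1}$-boundary (empty prefix) whose MERF is exactly $2^{q-1}$, so that only the suffix is filled, giving $-n\bmod 2^{q-1}$ determined letters. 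I expect the $\alpha\alpha$-degeneracy in the length recursion and the existence of these extremal (aligned, exactly-$2^{q-1}$-MERF) words to be the two points demanding the most care; the remaining steps are bookkeeping with the recursion $\ell_{i+1}=(\ell_i+c_i)/2$.
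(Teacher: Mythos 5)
Your proposal is correct and rests on the same foundations as the paper's proof --- the MERF algorithm, Lemma~\ref{wl}, and extremal words --- but your execution of the upper bound is genuinely different and arguably cleaner. The paper argues forward and splits $[2^q,2^{q+1})$ into three ranges according to how many complete $2^{q-1}$-frame words $w$ can determine ($3$, $4$ or $5$), follows the algorithm through its ``best case'' in each range, and checks that each range yields at most $2^{\lfloor\log_2(n-2)\rfloor+2}-n$ filled letters. You instead invert the length recursion: from $\ell_i=2\ell_{i+1}-c_i$ with $c_i\le 2$ you lower-bound $n$ in terms of the iteration count $Q$ and the terminal length $\ell_Q\in\{2,3\}$, obtaining $L=2^Q\ell_Q\le 2^{s+2}$ uniformly with no case split on $n$; your explicit handling of the degenerate pre-terminal $\alpha\alpha$ (forcing $\ell_{Q-2}=4$ because every proper factor of $0101$ of length $2$ or $3$ is terminal) addresses exactly the point where the naive iteration $x\mapsto 2x-2$ stalls at its fixed point $2$, a subtlety the paper's case analysis passes over silently, so this is a small gain in rigor. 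Your lower-bound argument is the paper's, phrased as a congruence: the fill of an extensible $2^{q-1}$-frame is $\equiv -n \pmod{2^{q-1}}$ and nonnegative. Two soft spots remain. For the sharpness of $u_{\max}$ your witness (a length-$n$ factor of $f_{2^{s+2}}$ straddling the central junction) differs from the paper's word, the first $n$ letters of $\suf_1(f_{2^{q-1}})f_{2^{q-1}}\overline{f_{2^{q-1}}}\,\overline{f_{2^{q-1}}}f_{2^{q-1}}$, and your claim that crossing the junction forces the $2^{s+1}$-frame is asserted rather than proved --- though this is the same level of rigor as the paper's own unverified example. For the sharpness of $u_{\min}$ you only postulate an aligned factor with MERF exactly $2^{q-1}$ instead of exhibiting one; the paper's witnesses, the first $n$ letters of $f_{2^{q-1}}\overline{f_{2^{q-1}}}f_{2^{q-1}}$ for $r\le 2^{q-1}$ and of $f_{2^q}\overline{f_{2^q}}$ for $r>2^{q-1}$, instantiate exactly the words you describe, so this is easily repaired. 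Both you and the paper also rely, with equal informality, on the identification ``uniquely determined letters $=$ fill of the MERF,'' so you assume nothing the paper does not.
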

\begin{proof}
Table~\ref{lookup} allows us to check the cases with $n \leq 3$. Then we take a look at the function $u_{\min}(n)$.  According to Lemma~\ref{wl} the word $w$ has an extensible
 $2^{q-1}$-reading frame and therefore determines  at least $ 2^{q-1}\cdot i-n$ letters for some $i\in \mathbb{N}_0$. But the smallest positive value of
 $ 2^{q-1}\cdot i-n$ is exactly $-n \Mod 2^{q-1}$. To show
 that it is actually possible to obtain this value  for $r\leq 2^{q-1}$ and $r> 2^{q-1}$, take the first $n$ letters of $f_{2^{q-1}}\overline{f_{2^{q-1}}}f_{2^{q-1}}$ 
 and $f_{2^q}\overline{f_{2^q}}$, respectively.
 
 To analyze the function $u_{\max}(n)$ we insert the word $w\in T^n$ in a $2^{q-1}$-reading frame, which exists according to Lemma~\ref{wl}. A word of length $2^q$ or $2^q+1$ can 
 determine $3$ frame words in the extensible $2^{q-1}$-reading frame. So after $q-1$ iterations of the while loop we have a word of length $3$. We enter the while loop again, extend the word (in the best case) to length $4$ and then
 map it via $m^{-1}$ to a word with pattern $\alpha\overline{\alpha}$. So after $q$ iterations we determined $2\cdot2^q$ letters.
 
 If the word $w$ has length $2^q+2 \leq |w|\leq 2^q+2^{q-1}+1$ it can determine $4$ frame words. So after $q-1$ iterations we have a word of length $4$ which (in the best case)
 has a $4$-reading frame and gives therefore $2$ further iterations before we end up in a word with pattern $\alpha\overline{\alpha}$. Here we determined $2\cdot2^{q+1}$ letters.
 
 If $2^q+2^{q-1}+2 \leq |w|\leq 2^{q+1}-1$ the word $w$ can determine $5$ frame words, so we have a word of length $5$ after $q-1$ iterations, extend it to length $6$, map it 
 to length $3$ and (in the best case) extend it to length $4$, before it is mapped to a word with pattern $\alpha\overline{\alpha}$. Again we determined $2\cdot2^{q+1}$ letters.
 
 In each of these cases we determined $2^{\lfloor \log_2 (n-2) \rfloor+2} -n$ new letters, but we always assumed a best case. What is left  is to show that there is always a
 word $w$, with $|w|=2^q+r$, $r<2^q$, so that the best case occurs. The first $n$ letters of $\suf_1(f_{2^{q-1}})f_{2^{q-1}}\overline{f_{2^{q-1}}}\overline{f_{2^{q-1}}}f_{2^{q-1}}$ form 
 such a word.
\end{proof}

Let us look at the relative length $\frac{|w'|}{|w|}$ of an extension, where $w$ is the input and $w'$ is the output of the algorithm. 

\begin{lem}
Let $w$ be a factor of the Thue--Morse word and let $w'$ be its unique extension.
 The relative length  taken over all $w\in T$ satisfies  $\inf  \frac{|w'|}{|w|}=1$ and $\sup  \frac{|w'|}{|w|}=4$.
\end{lem}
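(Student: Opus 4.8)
The plan is to read the statement entirely through Theorem~\ref{rf}. By construction the output $w'$ of the MERF algorithm is $w$ together with exactly the letters that $w$ uniquely determines, so if $u(w)$ denotes the number of determined letters then $|w'| = |w| + u(w)$ and
$\frac{|w'|}{|w|} = 1 + \frac{u(w)}{|w|}$.
Writing $n := |w| = 2^q + r$ with $r < 2^q$, Theorem~\ref{rf} gives $u_{\min}(n) \le u(w) \le u_{\max}(n)$, and moreover both endpoints are realised for every $n$ because that theorem asserts the bounds are sharp. Hence the range of $|w'|/|w|$ over all $w \in T$ is governed entirely by the two scalar functions $1 + u_{\min}(n)/n$ and $1 + u_{\max}(n)/n$, and the whole problem reduces to analysing these.

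For the infimum I would first observe that $u_{\min}(n) \ge 0$ for every $n$: it is $0$ for $n \le 3$, and for $n > 3$ it equals $-n \Mod 2^{q-1}$, which is nonnegative by definition of $\Mod$. Thus $\frac{|w'|}{|w|} \ge 1$ for all $w \in T$. To see that $1$ is attained, I take $n = 2^q$, where $u_{\min}(2^q) = -2^q \Mod 2^{q-1} = 0$; by the sharpness clause there is a factor of length $2^q$ determining no further letters, for which $|w'| = |w|$. Therefore $\inf |w'|/|w| = 1$ (indeed a minimum).

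For the supremum I would bound, for $n > 3$,
$\frac{|w'|}{|w|} \le 1 + \frac{u_{\max}(n)}{n} = \frac{2^{\lfloor \log_2(n-2)\rfloor + 2}}{n}$.
Setting $k := \lfloor \log_2(n-2)\rfloor$ we have $2^k \le n-2$, i.e. $n \ge 2^k + 2$, so this ratio is at most $\frac{2^{k+2}}{2^k + 2} < 4$; the finitely many cases $n \le 3$ are checked directly and also fall below $4$. Hence $|w'|/|w| < 4$ for every $w$. To show $4$ is approached, I run $n$ through the smallest length in each range, $n = 2^k + 2$, where the upper bound becomes $\frac{2^{k+2}}{2^k + 2} \to 4$ as $k \to \infty$; sharpness of $u_{\max}$ again guarantees genuine factors realising these values. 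This yields $\sup |w'|/|w| = 4$, not attained.

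The only real work is the bookkeeping with the $\lfloor \log_2(n-2)\rfloor$ expression. The point I expect to require care, rather than a genuine obstacle, is keeping the inequality $n \ge 2^k + 2$ in the correct direction so that $2^{k+2}/n$ stays strictly below $4$ while still having $4$ as its limit, and invoking the sharpness of Theorem~\ref{rf} to confirm that the extremal lengths $n = 2^q$ and $n = 2^k + 2$ are realised by actual Thue--Morse factors and not merely bounded.
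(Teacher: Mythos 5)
Your proof is correct and takes essentially the same route as the paper: both reduce the lemma to Theorem~\ref{rf}, and the extremal words you invoke through its sharpness clause are exactly the witnesses used there (extension-free words such as $f_{2^q}\overline{f_{2^q}}$ for the infimum, and the length-$(2^q+2)$ words extending to length $4\cdot 2^q$ for the supremum). If anything, your version is slightly more complete, since you spell out the estimate $2^{k+2}/(2^k+2)<4$, which the paper leaves implicit in Theorem~\ref{rf}'s upper bound.
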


\begin{proof}
 We have $\frac{|w'|}{|w|}=1$ for the words $v=f_{2^q}\overline{f_{2^q}}$ of length $2\cdot 2^q$ and $w=f_{2^{q}}\overline{f_{2^{q}}}f_{2^{q}}$ of length $3\cdot 2^q$.

For the upper limit we look at the  words $w_q=\suf_1(f_{2^{q-1}})f_{2^{q-1}}\overline{f_{2^{q-1}}}1.$  These words of length $2^q+2$ have an unique extension $w'_q$ of length
$4\cdot 2^q$. Therefore we have
$\lim_{q\rightarrow \infty}\frac{|w_q'|}{|w_q|}=\lim_{q\rightarrow \infty}\frac{4\cdot 2^q}{2^q+2}=4.$
\end{proof}

\begin{ex}
The word $w=\hb{0110}\hb{1001}$ is a word of length $8$ without an extension ($\frac{|w'|}{|w|}=1$), while $v=\lhb{0}\rhb{01101}$ is a word of length 6 which can be extended to the word
$v'=\hb{\deemph{1001011}0}\hb{01101\deemph{001}}$ of length 16 ($\frac{|v'|}{|v|}=\nicefrac{8}{3}$). 
\end{ex}

\section{ The odd frame and the short coding}\label{obf}

In this section we compress the information in $\class(w)$ from Definition \ref{con}. From now on we call the extensible $2$-reading frame also \emph{even frame}. We can get
an other reading frame if we shift the even frame one letter. This
new reading fame is called \emph{odd frame}.

We use these names since a given factor occurs an infinite number of times in the Thue--Morse word but always with the same parity (for $n>3$) of the first letter. So a word is
in the \emph{even frame} if its  first letter in the Thue--Morse word has even parity and it is in the \emph{odd frame} otherwise.

\begin{ex}
 A word $01011$ can be read in the even frame $\hb{01}\hb{01}\rhb{1}$ or in the odd frame $\lhb{0}\hb{10}\hb{11}$.
\end{ex}

While the only two complete frame words in the even frame are $01$ and $10$ we have the four complete frame words $00,01,10$ and $11$ in the odd frame. We call the
odd frame words
$00$ and $11$ \emph{pairs}. The easiest way to find the odd frame of a word is to look for pairs, since pairs can only occur in the odd frame.  

We define a
\emph{short coding} for odd frame words as: 
$\hb{01},\hb{10}\mapsto \mathbf{D}$(ifferent), 
$\hb{00},\hb{11}\mapsto \mathbf{E}$(qual) and finally $\lhb{0},\lhb{1},\rhb{0},\rhb{1}\mapsto \mathbf{S}$(hort).

An odd word without a prefix and suffix in the odd frame is called \emph{ pure odd word}. The study of pure odd words will turn out to be crucial for the rest of the paper.
There is no $\mathbf{S}$ in the short coding of a pure odd word and all pure odd words have even length.

\begin{ex}
 The word $v=1100$ is a pure odd word since it has the odd frame $v=\hb{11}\hb{00}$ and the short coding $\mathbf{E}\mathbf{E}$. On the other hand the word
 $w=01001$ is not a pure odd wort since the odd frame $w=\hb{01}\hb{00}\rhb{1}$ has a single letter suffix and therefore the short coding $\mathbf{DES}$.
\end{ex}

If an odd frame word ends with one letter, the next one starts with another letter since $\lhb{\alpha}\rhb{\alpha}$ in the odd frame would be $\hb{\alpha\alpha}$ in the even frame 
which can not occur. This fact allows us to recover a word $w$ from its short coding if we know a single letter of $w$, and to recover $\pat(w)$ from the short
coding too.

\begin{ex}
Take the word $w=1001011$. It contains two pairs $1\hb{00}10\hb{11}$ and has therefore the odd frame $w=\lhb{1}\hb{00}\hb{10}\hb{11}$. 
The short coding $\mathbf{SEDE}$ gives the pattern $\lhb{\alpha}\hb{\overline{\alpha}\overline{\alpha}}\hb{\alpha\overline{\alpha}}\hb{\alpha\alpha}$ since $t$ is cubefree.  If we know
 $w_0$ we can recover $w$ from its short coding.
 \end{ex}

 Thus we can switch between patterns in the odd frame and the short coding. We will use this in the following proofs.
 
\begin{lem}\label{prop}
 The odd frame of the infinite Thue--Morse word has following properties:
 \begin{itemize}
  \item The sequence $\mathbf{DD}$ can not occur.  
  \item The sequence $\mathbf{DEED}$ can not occur.  
  \item The sequence $\mathbf{EEEE}$ can not occur.
 \end{itemize}
\end{lem}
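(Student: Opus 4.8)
The plan is to work entirely in the short coding and use the morphism structure together with the fact that $t$ is overlap-free and cubefree. Since the odd frame is the even frame shifted by one letter, a pure odd factor of $t$ corresponds, after applying $m^{-1}$, to a factor of $t$ read in the even frame; and the short coding encodes exactly the pattern $\pat(w)$ up to a single known letter. So each forbidden short-coding subword translates into one or more forbidden patterns in $t$, which I would then rule out by exhibiting an overlap or a cube, or by taking the $m^{-1}$-preimage and reducing to a shorter already-excluded word (as was done for reading frames in the earlier examples).

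First I would spell out the translation dictionary: given a stretch of short coding, reconstruct the underlying pattern in the odd frame. Recall that $\mathbf{D}$ is a complete odd-frame block $\hb{\alpha\overline{\alpha}}$, $\mathbf{E}$ is a pair $\hb{\alpha\alpha}$, and consecutive blocks must agree at the shared boundary only through the global word $w$, not through the blocks themselves — but crucially, since $t$ is cubefree, the letter at the start of each block is forced once we fix $w_0$, exactly as in the example computing $\mathbf{SEDE}\mapsto\lhb{\alpha}\hb{\overline{\alpha}\overline{\alpha}}\hb{\alpha\overline{\alpha}}\hb{\alpha\alpha}$. Thus each of the three candidate subwords yields (up to complementation, which patterns absorb) a unique literal pattern that $t$ would have to contain.

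Then I would treat the three cases. For $\mathbf{DD}$: the two $\mathbf{D}$ blocks give $\hb{\alpha\overline{\alpha}}\hb{?\,?}$; writing out the forced letters produces a pattern of length $4$ in the odd frame. Applying $m^{-1}$ to the even-frame reading collapses two odd blocks into one letter each, and I expect the preimage to be a length-$2$ or length-$3$ word whose pattern is already known to be forbidden, or directly an overlap. I anticipate $\mathbf{DD}$ reduces to the even-frame block $\hb{\alpha\alpha}$ appearing, i.e.\ two equal adjacent even-frame blocks, which cannot occur. For $\mathbf{DEED}$ and $\mathbf{EEEE}$: these are longer, so rather than arguing directly I would take the $m^{-1}$-preimage of the corresponding even-frame word and show it is itself a word containing a forbidden short-coding pattern or a cube; since every $\mathbf{E}$ (a pair) forces specific behavior under $m^{-1}$, the recursion should terminate at one of the length $\le 4$ forbidden patterns from the earlier examples ($\alpha\alpha\alpha$, $\alpha\alpha\overline{\alpha}\alpha\alpha$, $\alpha\overline{\alpha}\alpha\overline{\alpha}\alpha$).

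The main obstacle is the bookkeeping of the frame shift: because the short coding lives in the odd frame while $m$ and $m^{-1}$ act naturally on the even frame, I must track carefully how a pure odd word sits inside $t$'s even frame before I can apply the morphism, and make sure the boundary letters between short-coding blocks are the forced ones rather than free choices. Concretely, the delicate point is justifying that the length-$4$ (resp.\ length-$8$) pattern forced by $\mathbf{DEED}$ (resp.\ $\mathbf{EEEE}$) really is the preimage of an overlap or cube and not an artifact of a wrong frame alignment. Once the alignment is pinned down, each case is a short finite check, so I expect the whole lemma to follow from three small pattern computations plus one application of overlap-freeness (for $\mathbf{DD}$ and $\mathbf{EEEE}$) and cubefreeness (for $\mathbf{DEED}$).
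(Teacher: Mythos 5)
Your overall strategy --- translate each forbidden short-coding subword into a literal pattern, then exclude it by reading-frame arguments, reducing via $m^{-1}$ to the short forbidden patterns --- is essentially the paper's strategy, and for $\mathbf{DEED}$ and $\mathbf{EEEE}$ your plan does work: after extending by one letter on each side to align with the even frame, $m^{-1}$ sends the forced patterns $\alpha\overline{\alpha}\alpha\alpha\overline{\alpha}\overline{\alpha}\alpha\overline{\alpha}$ and $\alpha\alpha\overline{\alpha}\overline{\alpha}\alpha\alpha\overline{\alpha}\overline{\alpha}$ in a single step to $\alpha\alpha\overline{\alpha}\alpha\alpha$ and $\alpha\overline{\alpha}\alpha\overline{\alpha}\alpha$ respectively, which are precisely the length-$5$ forbidden patterns from the paper's examples. (The paper instead verifies directly that these two length-$8$ patterns admit no $4$- resp.\ $8$-reading frame --- the same machinery without the explicit preimage step.)

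However, the $\mathbf{DD}$ case, the one you treat most concretely, has a genuine gap on two counts. First, your dictionary claim that cubefreeness forces the initial letter of each block fails at a $\mathbf{D}$--$\mathbf{D}$ junction: the realization $\hb{\alpha\overline{\alpha}}\hb{\overline{\alpha}\alpha}$ contains no cube, so cubefreeness alone does not reduce $\mathbf{DD}$ to a unique pattern (it does work after an $\mathbf{E}$, as in the $\mathbf{SEDE}$ example, but not here). What kills this realization is that the even-frame block straddling the junction would be the pair $\overline{\alpha}\,\overline{\alpha}$, which is impossible since every complete even-frame block is $m$ of a letter, hence $01$ or $10$. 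Second, for the remaining realization $\hb{\alpha\overline{\alpha}}\hb{\alpha\overline{\alpha}}$, your anticipated contradiction --- ``two equal adjacent even-frame blocks, which cannot occur'' --- is simply false: since $m(00)=0101$, the block pair $\hb{01}\hb{01}$ occurs in the even frame of $t$ wherever $00$ occurs (for instance $t_{10}t_{11}t_{12}t_{13}=0101$). What $\mathbf{DD}$ actually forces is \emph{three} equal consecutive even-frame blocks: the straddling block and both outer ones are all forced to be $\overline{\alpha}\alpha$, so $t$ would contain $(\overline{\alpha}\alpha)^3$ --- equivalently, the $m^{-1}$-preimage contains the cube with pattern $\alpha\alpha\alpha$ --- contradicting cubefreeness. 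This is exactly the argument of Example~\ref{frame} in the paper, which shows that $0101$ lies in the even frame and which the paper's proof of the lemma invokes. With this repair your $\mathbf{DD}$ case closes, and the rest of your plan goes through as a correct variant of the paper's proof.
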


\begin{proof}
  \begin{itemize}
  \item  We showed in Example~\ref{frame} that the word $0101$ and therefore the pattern $\alpha\overline{\alpha}\alpha\overline{\alpha}$ is in the even frame.
   \item  A word with pattern $\alpha\overline{\alpha}\alpha\alpha\overline{\alpha}\overline{\alpha}\alpha\overline{\alpha}$ has no $4$-reading frame and is therefore not in $T$.
    \item A word with pattern $\alpha\alpha\overline{\alpha}\overline{\alpha}\alpha\alpha\overline{\alpha}\overline{\alpha}$ has no $8$-reading frame and is therefore not in $T$.
 \end{itemize}
\end{proof}

 So at least every second letter in a short coding is an $\mathbf{E}$ and at
most $\nicefrac[]{3}{4}$ of the letters are $\mathbf{E}$. This gives an upper bound for the growth of the $2$-abelian complexity $\mathcal{P}_n$ of $t$.
 As consequence of Lemma~\ref{prop} two consecutive $\mathbf{E}$ have either the form $\mathbf{EE}$ or $\mathbf{EDE}$ which corresponds to the
 patterns $\alpha\alpha\overline{\alpha}\overline{\alpha}$ 
 and $\alpha\alpha\overline{\alpha}\alpha\overline{\alpha}\overline{\alpha}$. So we just proved the next Lemma.
\begin{lem} \label{alt}
 The pairs $00$ and $11$ alternate in the odd frame.
\end{lem}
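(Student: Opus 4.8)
The plan is to reduce the global alternation statement to a purely local one: I would show that any two \emph{consecutive} pairs in the odd frame (that is, two $\mathbf{E}$'s in the short coding with no $\mathbf{E}$ between them) are complementary, one being $00$ and the other $11$. Alternation of $00$ and $11$ is then immediate, since with only two pair types available, ``each pair is complementary to the next'' forces the global pattern $00,11,00,11,\dots$ (or its complement). The single-letter frame words coded by $\mathbf{S}$ occur only at the boundary of a factor and contain no pair, so they never separate two pairs and can be ignored for this count.

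First I would examine what can sit between two consecutive pairs. Since there is no $\mathbf{E}$ between them by assumption, every intervening complete frame word is coded by $\mathbf{D}$; and by Lemma~\ref{prop} the block $\mathbf{DD}$ cannot occur, so there is at most one such $\mathbf{D}$. Hence the local short coding around two consecutive pairs is either $\mathbf{EE}$ or $\mathbf{EDE}$. Next I would recover the underlying pattern in each case using cubefreeness of $t$. Writing the first pair as $\alpha\alpha$, the following letter cannot be $\alpha$ (else $t$ would contain $\alpha\alpha\alpha$): in the $\mathbf{EE}$ case this forces the second pair to be $\overline{\alpha}\overline{\alpha}$, giving the pattern $\alpha\alpha\overline{\alpha}\overline{\alpha}$; in the $\mathbf{EDE}$ case it forces the intervening $\mathbf{D}$ to be $\overline{\alpha}\alpha$ and then, again by cubefreeness, forces the second pair to be $\overline{\alpha}\overline{\alpha}$, giving $\alpha\alpha\overline{\alpha}\alpha\overline{\alpha}\overline{\alpha}$. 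In both patterns the two pairs are exactly $\alpha\alpha$ and $\overline{\alpha}\overline{\alpha}$, i.e.\ complementary, which is the local statement I want.

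The step that needs the most care is the pattern recovery: one must be sure the short coding (here restricted to complete frame words) determines the actual word up to a single global complement, so that the deductions via cubefreeness are valid. This rests on the correspondence between short codings and patterns established just before the lemma, so once $\mathbf{EE}$ and $\mathbf{EDE}$ are pinned to their patterns there is nothing further to compute. I would note that this is why the preceding discussion already asserts the result, and would present the proof simply as the chain Lemma~\ref{prop} $\Rightarrow$ local shape $\mathbf{EE}$ or $\mathbf{EDE}$ $\Rightarrow$ complementary consecutive pairs $\Rightarrow$ alternation.
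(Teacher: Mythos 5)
Your proposal is correct and matches the paper's argument exactly: the paper likewise derives the lemma from Lemma~\ref{prop} (no $\mathbf{DD}$), concluding that two consecutive $\mathbf{E}$'s appear as $\mathbf{EE}$ or $\mathbf{EDE}$, which by cubefreeness correspond to the patterns $\alpha\alpha\overline{\alpha}\overline{\alpha}$ and $\alpha\alpha\overline{\alpha}\alpha\overline{\alpha}\overline{\alpha}$, so consecutive pairs are complementary and hence alternate. Your version merely makes explicit (the at-most-one-$\mathbf{D}$ count, the boundary role of $\mathbf{S}$, and the pattern recovery) what the paper compresses into the remark preceding the lemma.
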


Now we will use the odd frame to compress the information from Definition~\ref{con}.
To achieve this we define a function $p\colon T\rightarrow \mathbb{N}_0,\enskip w \mapsto |w|_{00}+|w|_{11}$, which counts the pairs in a word $w$, and a function
\[r(w):=\begin{cases}
   0, \text{ if $w_0w_1$ is in the odd frame};\\
   1, \text{ if  $w_0w_1$ is in in the even frame};
  \end{cases}\]
which determines the reading frame of $w$. This means a word $w$ is a pure odd word if $r(w)=0$ and $|w|$ is even. It would be possible to use the short coding to define $r(w)$: $r(w)=1$ if
the short coding of $w$ starts with $\mathbf{S}$ and $r(w)=0$ otherwise.

With the two functions $p(w)$ and $r(w)$ we can collect all information necessary to determine the $2$-abelian equivalence class of a word $w$ in a $3$-tuple:
\[ \vect\colon  T \rightarrow \mathbb{N}_0 \times \{0,1\}^2,\enskip w \mapsto (w_0, p(w), r(w)).\]

       \begin{ex}
 Let us look at Example \ref{rr} again. For the word $w=10011010$ we have now $\vect(w)=(1,2,1).$
\end{ex}

In the next theorem we show that we have all information of $\class(w)$ in  $\vect(w)$, we can recover $\class(w)$  from $\vect(w)$. We will use the \emph{XOR} operator $\oplus$ and the Iverson bracket $\llbracket S \rrbracket$
 which is $1$ if the statement $S$ is true and $0$ otherwise.
\begin{lem} \label{vec} Let $w$ be a word of length $n$.
There is a function $h$ so that 
\[h(\vect(w))=\class(w).\]
For two words $v,w \in T^n$ with $v\not =w$, we have $h(\vect(v))=h(\vect(w))$ if and only if $v_0=w_0$, $p(v)=p(w)$ and $p(v)$ is even.
\end{lem}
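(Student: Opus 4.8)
The plan is to produce the decoding map $h$ by an explicit reconstruction of the six entries of $\class(w)$ from the three data $\vect(w)=(w_0,p(w),r(w))$ together with the common length $|w|$ (so $h=h_{|w|}$ is allowed to depend on the length, which is legitimate since the second statement compares words of equal length). The first entry $w_0$ is given outright. For the two ``different'' counts, note that the number of length-$2$ factors of $w$ is $|w|-1$, hence $|w|_{01}+|w|_{10}=q$ with $q:=|w|-1-p(w)$. Each ``different'' adjacency flips the current letter while each pair preserves it, so the last letter of $w$ equals $w_0\oplus(q\bmod 2)$; this already recovers the sixth entry without using $r$. Knowing the first and last letter fixes the \emph{sign} of $|w|_{01}-|w|_{10}$ (it is $+1,-1,0$ according to whether $w$ runs $0\cdots1$, $1\cdots0$, or starts and ends with the same letter), and together with the sum $q$ this yields $|w|_{01}$ and $|w|_{10}$ individually.

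The crux is splitting $p(w)$ into $|w|_{00}$ and $|w|_{11}$. Here I would invoke Lemma~\ref{alt}: the pairs $00$ and $11$ occur alternately in the odd frame, so the $p(w)$ pairs form an alternating run of types. Consequently, if $p(w)$ is even then $|w|_{00}=|w|_{11}=p(w)/2$, independently of $r(w)$; if $p(w)$ is odd then the type $X\in\{0,1\}$ of the \emph{first} pair occurs $(p(w)+1)/2$ times and $\overline{X}$ occurs $(p(w)-1)/2$ times. It remains to read off $X$ from $\vect(w)$, and I claim $X=w_0\oplus r(w)$. Indeed, every pair is a \emph{complete} odd-frame word, so it occupies a block $(w_\ell,w_{\ell+1})$ whose start index satisfies $\ell\equiv r(w)\pmod 2$ (odd-frame blocks begin at the even indices when $r(w)=0$ and at the odd indices when $r(w)=1$). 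Moreover, by definition of the \emph{first} pair every adjacency of $w$ before index $\ell$ is a ``different'' one, so the letters $w_0,w_1,\dots,w_\ell$ strictly alternate; hence the common value $X=w_\ell$ of the first pair equals $w_0$ when $\ell$ is even and $\overline{w_0}$ when $\ell$ is odd, i.e. $X=w_0\oplus r(w)$. This determines $|w|_{00},|w|_{11}$ and completes the construction of $h$ with $h(\vect(w))=\class(w)$. This paragraph, in particular the parity bookkeeping that ties $r(w)$ to the parity of the first-pair index, is the part I expect to require the most care; it also tacitly uses that $r(w)$ is well defined, i.e. that the extensible reading frame is unique, which holds in the relevant range $|w|>3$.

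For the second statement I would simply inspect this formula. Since $h(\vect(w))=\class(w)$, the equality $h(\vect(v))=h(\vect(w))$ is the same as $\class(v)=\class(w)$, i.e. $v\equiv_2 w$. From the reconstruction above, for a fixed length the output of $h$ depends on $r$ \emph{only} through the split of $p$ into the $00$- and $11$-counts, and that split is insensitive to $r$ exactly when $p$ is even. Hence two data $(a,p,r)$ and $(a',p',r')$ produce the same class if and only if $a=a'$, $p=p'$, and either the data coincide or $p$ is even. Reading this off: if $v_0=w_0$ and $p(v)=p(w)$ is even, then every entry produced by $h$ agrees and $\class(v)=\class(w)$; conversely, if $\class(v)=\class(w)$ then $v_0=w_0$ and $p(v)=p(w)$ are forced, and when this common value $p$ is odd the majority pair type recovers $X$ and hence $r=w_0\oplus X$, so $\vect(v)=\vect(w)$. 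Thus any two words with the same class but with $\vect(v)\neq\vect(w)$ must have $p$ even, which is the asserted equivalence (the condition separating the two sides should be read as $\vect(v)\neq\vect(w)$, since distinct words sharing their triple, such as $0010$ and $0100$, can have the same class with $p$ odd). Finally, to see the equivalence is not vacuous I would exhibit a genuine even-$p$ collision: a word read in the odd frame and one read in the even frame with the same first letter and the same even number of pairs have equal class but opposite $r$, so $h$ indeed identifies the two triples $(a,p,0)$ and $(a,p,1)$ precisely when $p$ is even.
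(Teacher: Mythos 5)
Your proof is correct and follows essentially the same route as the paper: you reconstruct the six entries of $\class(w)$ by parity bookkeeping, use Lemma~\ref{alt} to split $p(w)$ according to the type $X=w_0\oplus r(w)$ of the first pair (the paper obtains this same dichotomy by enumerating the short-coding prefixes $\mathbf{E},\mathbf{DE},\mathbf{SE},\mathbf{SDE}$, while you argue via the parity of the first pair's index), and observe that the dependence on $r(w)$ disappears exactly when $p(w)$ is even. Your careful reading of the second statement as a claim about distinct triples $\vect(v)\neq\vect(w)$ --- supported by the counterexample $0010,0100$ --- matches the paper's intended interpretation, since its proof likewise only treats the case $r(v)\neq r(w)$.
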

\begin{proof}
 The basic idea is to use parity arguments. If we take $w$ and erase one letter from every pair, we get a sequence of length $|w|-p(w)$ which alternates between $0$ and $1$ and 
 starts with $w_0$. Since the sequence has the same number of $01$ and $10$ as $w$, we can use it to determine $|w|_{01}$, $|w|_{10}$ and the last letter $w_ n$. 
 
The pairs  already form an alternating sequence (Lemma~\ref{alt}), so we only need to identify the first pair. In the odd frame a word can start either with
 $\mathbf{E}$ which corresponds to the
 pattern $\alpha\alpha$ or with
 $\mathbf{DE}$ which corresponds to the
 pattern $\alpha\overline{\alpha}\alpha\alpha$. In both cases the first pair is $w_0w_0$.   

In the even frame a word starts  with
$\mathbf{SE}$ which corresponds to the
 pattern $\alpha\overline{\alpha}\overline{\alpha}$ or with $\mathbf{SDE}$ which corresponds to the
 pattern $\alpha\overline{\alpha}\alpha\overline{\alpha}\overline{\alpha}$. In both cases the first pair is $\overline{w_0}\overline{w_0}$. This allows us
to  determine $|w|_{00}$, $|w|_{11}$. We can also give these values in an explicit form as
\[h\colon  (w_0, p(w), r(w)) \mapsto (a, b, c, d, w_0, e) \]
with 

\[a=\lfloor \tfrac{p(w)}{2} \rfloor+\llbracket p(w) \text{ odd} \rrbracket\llbracket w_0=0 \rrbracket\]
\[b=\lfloor \tfrac{|w|-p(w)}{2} \rfloor+\llbracket |w|-p(w)\text{ even}\rrbracket\llbracket w_0=0 \rrbracket\]
\[c=\lfloor  \tfrac{|w|-p(w)}{2}\rfloor+\llbracket |w|-p(w)\text{ even} \rrbracket\llbracket w_0=1 \rrbracket\]
\[d=\lfloor \tfrac{p(w)}{2} \rfloor+\llbracket  p(w) \text{ odd} \rrbracket\llbracket w_0=1 \rrbracket\]
\[e= \llbracket |w|-p(w)\text{ even}\rrbracket \oplus w_0.\]

Let $v,w \in T$ with $v\not =w$ now be two words that belong to the same equivalence class. Then $v_0=w_0$ and $p(v)=p(w)$, so they can only differ in the reading frame
with $r(v)\not =r(w)$. The reading frame determines the first pair in the alternating  pair sequence. If $p(w)$ is even, the numbers $|w|_{00}$ and $|w|_{11}$ do not
depend on $r(w)$. So $h(\vect(v))=h(\vect(w))$ for $v\not =w$ if and only if $v_0=w_0$, $p(v)=p(w)$ and $p(w)$ is even.
\end{proof}

The idea of Lemma~\ref{vec} is to gather more information in less memory. We need two boolean and four  integer variables for $\class(w)$ while $\vect(w)$ uses only
one  integer and two boolean variables.

\begin{ex}
We have $\class(w)=\class(v)$  for the  two words $w=011001$ and $v=001011$ but $\vect(w)\not=\vect(v)$ since $r(w)=1$ and $r(v)=0$. So $\vect$ can distinguish more words than
$\class$.
\end{ex}

With Lemma~\ref{vec} we can determine the $2$-abelian complexity $\mathcal{P}_n$ of $t$ if we know the possible values of $\vect(w)$, for $ w\in T_n$. Which are the possible values of $\vect(w)$?

The boolean variables can be $0$ or $1$ since
a word $w\in T_n$ can start either with $0$ or $1$
and can be in the even frame or in the odd frame. The difficult part is to find the  possible values of $p(w)$. In the next section we will find a method to  obtain them. 

\section{On pairs}\label{pa}

We are interested in how many pairs can occur in a word in $T_n$. So we define  $\mathbf{pairs}(n):=\{p(w)\mid w\in T_n\}$. It will  emerge that we will need a second set $\mathbf{PAIRS}(2n):=\{p(w)\mid r(w)=0,\,w\in T_{2n}\}$. The value
 $\mathbf{PAIRS}(n)$ is undefined for odd $n$. The elements of  $\mathbf{PAIRS}(2n)$ are the possible numbers of $\mathbf{E}$ in pure odd words of length $2n$.

\begin{ex}
Let us determine $\mathbf{pairs}(6)$ and $\mathbf{PAIRS}(6)$.

\begin{center}
 \begin{tabular}{ccc}
\toprule
  Pattern & Coding & $p(w)$  \\
  \midrule
  $\alpha\alpha\overline{\alpha}\alpha\overline{\alpha}\overline{\alpha}$ & $\mathbf{EDE}$ & $2$ \\
    $\alpha\alpha\overline{\alpha}\overline{\alpha}\alpha\alpha$ & $\mathbf{EEE}$ & $3$   \\
        $\alpha\alpha\overline{\alpha}\overline{\alpha}\alpha\overline{\alpha}$ &$\mathbf{EED}$ & $2$  \\
           $\alpha\overline{\alpha}\alpha\alpha\overline{\alpha}\alpha$ & $\mathbf{DED}$ & $1$   \\
 \bottomrule
 \end{tabular}
\hspace{2em}
 \begin{tabular}{ccc}
\toprule
  Pattern & Coding & $p(w)$\\
  \midrule
  $\alpha\overline{\alpha}\alpha\alpha\overline{\alpha}\overline{\alpha}$ & $\mathbf{DEE}$ & $2$ \\
  $\alpha\overline{\alpha}\alpha\overline{\alpha}\overline{\alpha}\alpha$ & $\mathbf{SDES}$ & $1$ \\
  $\alpha\overline{\alpha}\overline{\alpha}\alpha\alpha\overline{\alpha}$ & $\mathbf{SEES}$ & $2$  \\
  $\alpha\overline{\alpha}\overline{\alpha}\alpha\overline{\alpha}\alpha$ & $\mathbf{SEDS}$ & $1$  \\
 \bottomrule
 \end{tabular}
 \end{center}
So $\mathbf{pairs}(6)=\mathbf{PAIRS}(6)=\{1,2,3\}$.
\end{ex}

It is not always the case that $\mathbf{pairs}(2n)=\mathbf{PAIRS}(2n)$. For example $\mathbf{pairs}(8)=\{1,2,3\}$ while $\mathbf{PAIRS}(8)=\{2,3\}$.

The following theorem  is the main tool to prove results about the $2$-abelian complexity  $\mathcal{P}_n$, since all properties of 
$\mathcal{P}_n$ can be obtained from the properties
of $\mathbf{pairs}(n)$ and $\mathbf{PAIRS}(n)$.
\begin{theorem} \label{pairs}
For $n\geq 4$ the sets $\mathbf{pairs}(n)$ and $\mathbf{PAIRS}(n)$ fulfill the recursions
\begin{align}
 \mathbf{PAIRS}(2n)&=n-\mathbf{pairs}(n+1)\label{pairs1}\\
  \mathbf{pairs}(2n+1)&=\mathbf{PAIRS}(2n)\label{pairs2}\\
   \mathbf{pairs}(2n)&=\mathbf{PAIRS}(2n)\cup \mathbf{PAIRS}(2n-2)\label{pairs3}
   \end{align}
   with $n-\mathbf{pairs}(n+1):=\{n-y\mid y \in \mathbf{pairs}(n+1)\}$.
\end{theorem}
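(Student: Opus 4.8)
The plan is to prove all three set identities by tracking one quantity, the pair count $p(w)$, which by Lemma~\ref{prop} together with the remark that pairs can only occur in the odd frame equals the number of complete odd-frame words coded $\mathbf{E}$. The structural fact I would isolate first is that \emph{a pair never straddles an odd-frame boundary}: if an odd-frame word ends with a single letter the next letter differs, and between two complete odd-frame words the boundary falls inside a complete even-frame word, which is $01$ or $10$ and hence has distinct letters. Consequently, reading any $w$ with $|w|\geq 4$ in the odd frame as (optional single prefix)(complete words)(optional single suffix), the affixes contribute no pair and $p(w)=\#\{\text{complete words coded }\mathbf{E}\}$. This reduces every claim to counting $\mathbf{E}$'s in a block of complete odd-frame words.

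For \eqref{pairs1} — the step I expect to be the crux — I would use the morphism identities $t_{2j}=t_j$ and $t_{2j+1}=\overline{t_j}$ (equivalently, the even frame is $m$ applied to $t$). Writing the $i$-th complete odd-frame word of $t$ as $t_{2i-1}t_{2i}=\overline{t_{i-1}}\,t_i$, it is a pair exactly when $t_{i-1}\neq t_i$. Hence a pure odd word $w$ of length $2n$, occurring at some odd position $2s-1$, satisfies $p(w)=\#\{i\in\{s,\dots,s+n-1\}:t_{i-1}\neq t_i\}$. Setting $u:=t_{s-1}t_s\cdots t_{s+n-1}\in T_{n+1}$, the $n$ adjacent letter-pairs of $u$ split into the equal ones (counted by $p(u)$) and the differing ones, so $p(w)=n-p(u)$. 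As $w$ ranges over all pure odd words of length $2n$ the window $u$ ranges over all of $T_{n+1}$ (every factor of length $n+1$ occurs at some position $s-1\geq 0$, and the corresponding shifted window is pure odd), which yields $\mathbf{PAIRS}(2n)=\{\,n-p(u):u\in T_{n+1}\,\}=n-\mathbf{pairs}(n+1)$.

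The remaining two identities are routine frame bookkeeping once the affix remark is in hand. For \eqref{pairs2}, a word of odd length $2n+1$ read in the odd frame has \emph{exactly one} single-letter affix (both or neither would force the wrong parity); deleting it leaves a pure odd word of length $2n$ with the same $p$, giving $\mathbf{pairs}(2n+1)\subseteq\mathbf{PAIRS}(2n)$, and conversely extending a pure odd word by one letter (possible since $t$ is infinite) appends a boundary-straddling, hence non-pair, letter, giving the reverse inclusion. For \eqref{pairs3}, split words of even length $2n$ by the parity of their starting position: odd-parity words are pure odd, so their $p$ lies in $\mathbf{PAIRS}(2n)$, while even-parity words read in the odd frame have the shape (single prefix)($n-1$ complete words)(single suffix), whose middle block is a pure odd word of length $2n-2$, so their $p$ lies in $\mathbf{PAIRS}(2n-2)$. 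Conversely $\mathbf{PAIRS}(2n)\subseteq\mathbf{pairs}(2n)$ is immediate, and any pure odd word of length $2n-2$ extends on both sides (by recurrence of $t$) to a length-$2n$ factor with the same $p$, since the two added letters straddle odd-frame boundaries and create no new pair.

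Two bookkeeping points remain. First, $n\geq 4$ ensures every word involved has length $\geq 4$, hence a unique even frame and therefore a unique odd frame, so the decompositions above are well defined. Second, all the reverse inclusions rely only on the infinitude and recurrence of $t$ to realize the required one- or two-sided extensions, after which the boundary-straddling remark guarantees $p$ is unchanged. The genuinely non-mechanical ingredient is the duality $t_{2i-1}t_{2i}=\overline{t_{i-1}}\,t_i$ driving \eqref{pairs1}; everything else is a parity-and-frame count.
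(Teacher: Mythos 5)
Your proposal is correct and follows essentially the same route as the paper: your index identity $t_{2i-1}t_{2i}=\overline{t_{i-1}}\,t_i$ is exactly the paper's bijection for \eqref{pairs1} (take $u\in T_{n+1}$, form $m(u)$, and trim its first and last letter to get a pure odd word $w$ with $00$ in $w$ corresponding to $10$ in $u$ and $11$ to $01$, hence $p(w)=n-p(u)$), and your affix-stripping arguments for \eqref{pairs2} and \eqref{pairs3} coincide with the paper's bijections adding and removing $\mathbf{S}$. The only differences are presentational: you make explicit the ``no pair straddles an odd-frame boundary'' fact, which the paper established earlier via the short coding (pairs occur only in the odd frame), and you check surjectivity via occurrences and recurrence where the paper simply asserts the correspondence is bijective.
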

\begin{proof}
 The proof works only for $n\geq 4$ since we distinguish between even and odd frame and words of length $n < 4$ may not have a   defined $2$-reading frame.
 
 Let $w\in T_{n+1}$.  We have $|w|_{01}+|w|_{10}=n-p(w)$ since $w$ has $n$ factors of length $2$. The image $m(w)$ has length $2n+2$ and is in the even frame. 
 We remove the prefix and the suffix of  $m(w)$ to get a pure odd word $w'$ of length $2n$. We have a pair $00$ in $w'$ if and only if there is a $10$ in $w$ and a pair $11$ in $w'$ if and only if there is a $01$ in $w$.
 Thus $p(w')=n-p(w)$. Since the steps to get from $w$ to $w'$ are bijective, the two sets $\mathbf{PAIRS}(2n)$ and $n-\mathbf{pairs}(n+1)$ are equal. 
 
 All words $w\in T_{2n+1}$ are of the form $w'\mathbf{S}$ or $\mathbf{S}w'$ where $w'$ is a pure odd word of length $2n$. Since $p(w')=p(w'\mathbf{S})=p(\mathbf{S}w')$ 
 the bijection $w'\mapsto w'\mathbf{S}$ proves $\mathbf{pairs}(2n+1)=\mathbf{PAIRS}(2n)$.
 
 Every word in $w\in T_{2n}$ is either of the form $w'$ or $\mathbf{S}w''\mathbf{S}$, where $w'$ is a pure odd word of length $2n$ and $w''$ is a pure
 odd word of length $2n-2$. Again, adding and removing $\mathbf{S}$ is a bijection which does not change the number of pairs. Therefore
 $\mathbf{pairs}(2n)=\mathbf{PAIRS}(2n)\cup \mathbf{PAIRS}(2n-2)$.
\end{proof}

We write $\lbrack a, b\rbrack$ to denote the interval of all integers between $a$ and $b$, including both. The integer interval $\lbrack a, b\rbrack$ has cardinality
$\#\lbrack a, b\rbrack=b-a+1$.
For a set $S$ we define $\#_2 S$, the number of even elements in $S$, as \[\#_2 S:=\#\{s\in S\mid s\equiv 0 \pmod 2 \}.\]

\begin{lem}\label{mvs}
 The sets $\mathbf{pairs}(n)$ and $\mathbf{PAIRS}(2n)$ are integer intervals.
\end{lem}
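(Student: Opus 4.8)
The plan is to argue by strong induction on $n$, feeding on the three recursions of Theorem~\ref{pairs}. Two of them are harmless for the interval property: identity \eqref{pairs1} presents $\mathbf{PAIRS}(2n)$ as the affine image $n-\mathbf{pairs}(n+1)$, and reflecting or translating a block of consecutive integers again gives consecutive integers, so $\mathbf{PAIRS}(2n)$ is an interval as soon as $\mathbf{pairs}(n+1)$ is; identity \eqref{pairs2} merely copies $\mathbf{PAIRS}(2n)$ onto $\mathbf{pairs}(2n+1)$. The small indices (in particular the directly computed $\mathbf{PAIRS}(6)=\mathbf{pairs}(6)=\{1,2,3\}$) I would dispatch by enumeration as in the worked example, after which the recursions reduce every index to strictly smaller ones, so the induction is well founded.

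The only genuine work is identity \eqref{pairs3}, which writes $\mathbf{pairs}(2n)$ as the union $\mathbf{PAIRS}(2n)\cup\mathbf{PAIRS}(2n-2)$ of two sets already known to be intervals. A union of two integer intervals is again an interval precisely when they overlap or abut, so the whole lemma comes down to ruling out a gap between these two intervals.

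To control the gap I would first record a monotonicity fact for the endpoints $m(n):=\min\mathbf{pairs}(n)$ and $M(n):=\max\mathbf{pairs}(n)$. Since $p(w)=|w|_{00}+|w|_{11}$ counts only length-$2$ factors, deleting or appending a single boundary letter changes $p$ by $0$ or $1$; together with $T$ being factor-closed and every factor of $t$ admitting a right extension inside $t$, this yields $m(n)\le m(n+1)\le m(n)+1$ and $M(n)\le M(n+1)\le M(n)+1$. Writing both intervals in \eqref{pairs3} through \eqref{pairs1} as $\mathbf{PAIRS}(2n)=[\,n-M(n+1),\,n-m(n+1)\,]$ and $\mathbf{PAIRS}(2n-2)=[\,n-1-M(n),\,n-1-m(n)\,]$, the bound $M(n+1)-M(n)\le1$ makes the left endpoint of the first at least that of the second, the bound $m(n+1)-m(n)\le1$ does the same for the right endpoints, and the decisive inequality $n-M(n+1)\le (n-1-m(n))+1$ reduces to $m(n)\le M(n+1)$, which is immediate from $m(n)\le M(n)\le M(n+1)$. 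Hence the first interval starts no later than one past the end of the second, so their union is the single interval $[\,n-1-M(n),\,n-m(n+1)\,]$ and the induction closes.

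I expect the no-gap step for \eqref{pairs3} to be the sole obstacle; everything else is bookkeeping. The cleanest way to make it airtight is to fold the endpoint description into the induction hypothesis, proving simultaneously that the sets are intervals and that their extrema obey the above monotonicity, so that the overlap inequalities are available at every stage. The one mild annoyance is that the recursions are valid only for $n\ge4$, which is exactly why the handful of small indices must be handled separately by direct computation.
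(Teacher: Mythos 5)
Your proposal is correct and takes essentially the same route as the paper: base cases by direct enumeration, induction through the recursions of Theorem~\ref{pairs}, with \eqref{pairs1} and \eqref{pairs2} trivially preserving intervals and the union in \eqref{pairs3} shown gap-free because consecutive endpoints move in steps of $0$ or $1$. The only difference is one of detail: the paper simply asserts this step-size property ``from the definition of $\mathbf{PAIRS}(2n)$,'' whereas you derive the $\mathbf{PAIRS}$ endpoints through \eqref{pairs1} from the monotonicity of $\min\mathbf{pairs}$ and $\max\mathbf{pairs}$ and spell out the overlap inequalities explicitly, which makes your write-up the more rigorous version of the same argument.
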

\begin{proof}
This is true for $n<10$ (cf. Table~\ref{small}). All other values can be calculated using Theorem~\ref{pairs}. In cases (1) and (2)  of Theorem~\ref{pairs} it is obvious that integer intervals are mapped to
integer intervals, we just
have to show that $\mathbf{PAIRS}(2n)\cup \mathbf{PAIRS}(2n-2)$ is an integer interval. This is true, since we know from the definition of $\mathbf{PAIRS}(2n)$ that the upper and lower 
limit of two consecutive sets can differ only by $1$.
\end{proof}

\begin{table}[ht]
\begin{center}
 \setlength{\tabcolsep}{2pt}
\begin{tabular}[h]{cccc}
 \toprule
$n$ & $\mathbf{PAIRS}(n)$ &$\mathbf{pairs}(n)$&$\mathcal{P}_n$\\
\midrule
0&\{0\}&\{0\}&1\\
1&&\{0\}&2\\
2&\{0,1\}&\{0,1\}&4\\
3&&\{0,1\}&6\\
4&\{1,2\}&\{0,1,2\}&8\\
\bottomrule
\end{tabular}
\hspace{2em}
\begin{tabular}[h]{cccc}
 \toprule
$n$ & $\mathbf{PAIRS}(n)$ &$\mathbf{pairs}(n)$&$\mathcal{P}_n$\\
\midrule
5&&\{1,2\}&6\\
6&\{1,2,3\}&\{1,2,3\}&8\\
7&&\{1,2,3\}&10\\
8&\{2,3\}&\{1,2,3\}&8\\
9&&\{2,3\}&6\\
\bottomrule
\end{tabular}
\end{center}
\caption{Small values of the functions}\label{small}
\end{table}

In the next theorem we determine the number of $2$-abelian equivalence classes.

\begin{lem} \label{conclass}
  For $n\geq 4$ the number of $2$-abelian equivalence classes $ \mathcal{P}_n$ is given by the two formulas
 \begin{flalign*}
 \mathcal{P}_{2n+1}&=2(2\#\mathbf{PAIRS}(2n)-\#_2\mathbf{PAIRS}(2n)) \\
 \mathcal{P}_{2n}&=2(\#\mathbf{PAIRS}(2n)+\#\mathbf{PAIRS}(2n-2)\\ &\hspace{96pt} -\#_2(\mathbf{PAIRS}(2n)\cap\mathbf{PAIRS}(2n-2))).
\end{flalign*}
\end{lem}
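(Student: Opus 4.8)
The plan is to count the image of the composite map $\class=h\circ\vect$ on $T_n$ directly, using Lemma~\ref{vec} to control exactly when two distinct vectors produce the same class. Write $V_n:=\vect(T_n)\subseteq\{0,1\}\times\mathbb{N}_0\times\{0,1\}$ for the set of realized triples $(w_0,p(w),r(w))$. Since $\class=h\circ\vect$, we have $\mathcal{P}_n=\#\class(T_n)=\#h(V_n)$. By Lemma~\ref{vec}, two \emph{distinct} triples in $V_n$ have the same $h$-value if and only if they agree in the first two coordinates, differ in $r$, and their common $p$ is even; equivalently $h$ restricted to $V_n$ is injective except that it identifies $(a,p,0)$ with $(a,p,1)$ whenever both lie in $V_n$ and $p$ is even. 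Each such identification costs one class, so
\[ \mathcal{P}_n=\#V_n-C_n,\qquad C_n:=\#\{(a,p):a\in\{0,1\},\ p\text{ even},\ (a,p,0)\in V_n,\ (a,p,1)\in V_n\}. \]

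First I would exploit complementation symmetry. The factor set $T$ is closed under $w\mapsto\overline{w}$, and complementation satisfies $p(\overline{w})=p(w)$, $r(\overline{w})=r(w)$, $\overline{w}_0=\overline{w_0}$; hence $V_n$ is invariant under flipping its first coordinate. Thus $V_n=\{0,1\}\times U_n$ where $U_n:=\{(p,r):(a,p,r)\in V_n\}$ records the realized frame/pair-count pairs, so $\#V_n=2\#U_n$ and, since membership is independent of $a$, $C_n=2\,\#\{p\text{ even}:(p,0),(p,1)\in U_n\}$.

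Next I would read off $U_n$ from the structural decompositions already established in the proof of Theorem~\ref{pairs}. Writing $P_r:=\{p:(p,r)\in U_n\}$ for the $p$-values in each frame, the relevant facts are: a word of even length $2n'$ with $r=0$ is a pure odd word, so $P_0=\mathbf{PAIRS}(2n')$; one with $r=1$ has the form $\mathbf{S}w''\mathbf{S}$ with $w''$ pure odd of length $2n'-2$, and since this bijection preserves $p$, $P_1=\mathbf{PAIRS}(2n'-2)$; while a word of odd length $2n'+1$ is $w'\mathbf{S}$ (for $r=0$) or $\mathbf{S}w'$ (for $r=1$) with $w'$ pure odd of length $2n'$, giving $P_0=P_1=\mathbf{PAIRS}(2n')$. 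In the odd case $U_n=\mathbf{PAIRS}(2n')\times\{0,1\}$, so $\#U_n=2\#\mathbf{PAIRS}(2n')$ and $\#\{p\text{ even}:(p,0),(p,1)\in U_n\}=\#_2\mathbf{PAIRS}(2n')$; substituting into the displayed identity yields $\mathcal{P}_{2n'+1}=2(2\#\mathbf{PAIRS}(2n')-\#_2\mathbf{PAIRS}(2n'))$. In the even case $U_n=(\mathbf{PAIRS}(2n')\times\{0\})\cup(\mathbf{PAIRS}(2n'-2)\times\{1\})$ is a disjoint union, so $\#U_n=\#\mathbf{PAIRS}(2n')+\#\mathbf{PAIRS}(2n'-2)$, and a collapse occurs precisely at even $p$ realized in \emph{both} frames, i.e.\ $p\in\mathbf{PAIRS}(2n')\cap\mathbf{PAIRS}(2n'-2)$, which produces the second formula.

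The only genuinely delicate point is the bookkeeping of the collapse: one must verify that an identification subtracts exactly one class, that it happens only for \emph{even} $p$ realized in both reading frames, and that the complementation factor of two multiplies \emph{both} the raw count $\#V_n$ and the correction $C_n$. Matching ``even $p$ in both frames'' to $\#_2\mathbf{PAIRS}(2n')$ in the odd case versus $\#_2(\mathbf{PAIRS}(2n')\cap\mathbf{PAIRS}(2n'-2))$ in the even case is where the two formulas diverge, and is the step I would check most carefully; everything else is routine substitution. The hypothesis $n\geq 4$ is exactly what guarantees a well-defined even/odd frame and the applicability of the decompositions from Theorem~\ref{pairs}.
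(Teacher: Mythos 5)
Your proof is correct and follows essentially the same route as the paper: both arguments count the realized triples $\vect(w)=(w_0,p(w),r(w))$ via the pure-odd-word decompositions established in the proof of Theorem~\ref{pairs} (pure odd words for $r=0$, $\mathbf{S}w''\mathbf{S}$ respectively $w'\mathbf{S}$, $\mathbf{S}w'$ otherwise) and then subtract the collapses that Lemma~\ref{vec} identifies at even $p$ realized in both frames, with a factor of $2$ for the first letter. Your explicit complementation-symmetry justification of that factor of $2$ (and of the independence of the realized $p$-values from $w_0$) merely makes rigorous what the paper asserts in one line, so it is a refinement rather than a different approach.
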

\begin{proof}
 This is an immediate consequence of Lemma~\ref{vec} and Theorem~\ref{pairs}. First we look at $\mathcal{P}_{2n}$. We want to find all possible values of $\vect(w)$ for $w\in T_{2n}$.
 We have $\#\mathbf{PAIRS}(2n)$ possibilities to choose $p(w)$ in the odd frame and we have $\#\mathbf{PAIRS}(2n-2)$ possibilities to choose $p(w)$ in the even frame. If there
 is an even value $p(w)$ in both frames, the odd frame and the even frame give the same equivalence class so we subtract $\#_2(\mathbf{PAIRS}(2n)\cap\mathbf{PAIRS}(2n-2))$.
 Finally we multiply by $2$ since $w_0$ can be $0$ or $1$.
 
 We use the same argument for $\mathcal{P}_{2n+1}$ but we have $\mathbf{PAIRS}(2n)$  possible pairs  in both frames and therefore also the intersection.
\end{proof}

Now we look at several sets, because for even numbers the recursion needs two sets. If we know $\mathbf{pairs}(n)$ and $\mathbf{pairs}(n+1)$, we can use 
Theorem~\ref{pairs} to determine
$\mathbf{PAIRS}(2n-2)$ and $\mathbf{PAIRS}(2n)$ and thus $\mathbf{pairs}(2n-1)$, $\mathbf{pairs}(2n)$ and $\mathbf{pairs}(2n+1)$. With Lemma~\ref{conclass} we can then
determine $ \mathcal{P}_{2n-1}$, $\mathcal{P}_{2n}$ and $ \mathcal{P}_{2n+1}$.

It is clear from the definition of $\mathbf{pairs}(n)$ that the sequence $\min\mathbf{pairs}(n)$ is monotonically increasing in steps of $0$ or $1$:
\[ \min\mathbf{pairs}(n+1)-\min\mathbf{pairs}(n) \in\{0,1\}.\]
This  also holds for $\max\mathbf{pairs}(n)$, $\min\mathbf{PAIRS}(n)$ and $\max\mathbf{PAIRS}(n)$. So if $\mathbf{pairs}(n)=\lbrack a,b\rbrack$ we have four possibilities for
$\mathbf{pairs}(n+1)$:
\[\lbrack a,b\rbrack,\enskip \lbrack a+1,b\rbrack,\enskip\lbrack a,b+1\rbrack\text{ and }\lbrack a+1,b+1\rbrack.\]

Now we use Theorem~\ref{pairs} to make Table~\ref{MVS}. 

If we observe small values of the sequence  $\mathbf{pairs}(n)$ we see that the $IV.$ case does not occur.
The column   $\mathbf{PAIRS}(2n{+}i)$ of Table~\ref{MVS} shows that case $IV.$ can also not occur as image of smaller values. Therefore the case $IV.$ can not occur anywhere and
we have just proved:
\begin{equation}\label{eq1}
\mathbf{PAIRS}(2n-2)\not= \mathbf{PAIRS}(2n)\text{ and }\mathbf{pairs}(n+1)\not=1+\mathbf{pairs}(n).
\end{equation}

\begin{table}[h!bt]
 \setlength{\tabcolsep}{2pt}
\begin{footnotesize}
\begin{center}
\begin{tabular}[h]{cccllll}
\toprule
Case & $i$ & $\mathbf{pairs}(n{+}i)$ &  $\mathbf{PAIRS}(2n{+}i)$ & $\mathbf{pairs}(2n{+}i)$&   $\mathbf{PAIRS}(4n{+}i)$ & $\mathbf{pairs}(4n{+}i)$\\
\midrule
\multirow{4}{*}{I.} & $\scriptstyle{-}2$ & &\hspace{-0.6em}$\lbrack n{-}b{-}1,n{-}a{-}1\rbrack $&  &$\lbrack n{+}a{-}1,n{+}b\rbrack $&\\
  & $\scriptstyle{-}1$  & &&$\lbrack n{-}b{-}1,n{-}a{-}1\rbrack $&&\hspace{-0.3em}$\lbrack n{+}a{-}1,n{+}b\rbrack $\\
    & $\scriptstyle0$&$\lbrack a,b\rbrack $ &\hspace{-0.6em}$\lbrack n{-}b,n{-}a\rbrack $&$\lbrack n{-}b{-}1,n{-}a\rbrack $&$\lbrack n{+}a,n{+}b\rbrack $&\hspace{-0.3em}$\lbrack n{+}a{-}1,n{+}b\rbrack $\\
    &  $\scriptstyle{+}1$&$\lbrack a,b\rbrack $ & &$\lbrack n{-}b,n{-}a\rbrack $&&\hspace{-0.3em}$\lbrack n{+}a,n{+}b\rbrack $ \\
 \midrule
    \multirow{4}{*}{II.} & $\scriptstyle{-}2$ &  &\hspace{-0.6em}$\lbrack n{-}b{-}1,n{-}a{-}1\rbrack $&&$\lbrack n{+}a,n{+}b\rbrack $& \\
  & $\scriptstyle{-}1$  & &&$\lbrack n{-}b{-}1,n{-}a{-}1\rbrack $&&\hspace{-0.3em}$\lbrack n{+}a,n{+}b\rbrack $\\
    & $\scriptstyle0$&$\lbrack a,b\rbrack $ &\hspace{-0.6em}$\lbrack n{-}b,n{-}a{-}1\rbrack $&$\lbrack n{-}b{-}1,n{-}a{-}1\rbrack $&$\lbrack n{+}a{+}1,n{+}b\rbrack $&\hspace{-0.3em}$\lbrack n{+}a,n{+}b\rbrack $\\
    &  $\scriptstyle{+}1$&$\lbrack a{+}1,b\rbrack $ &&$\lbrack n{-}b,n{-}a{-}1\rbrack $ & &\hspace{-0.3em}$\lbrack n{+}a{+}1,n{+}b\rbrack $\\
 \midrule
    \multirow{4}{*}{III.} & $\scriptstyle{-}2$ &  &\hspace{-0.6em}$\lbrack n{-}b{-}1,n{-}a{-}1\rbrack $& &$\lbrack n{+}a{-}1,n{+}b\rbrack $& \\
  & $\scriptstyle{-}1$  & &&$\lbrack n{-}b{-}1,n{-}a{-}1\rbrack $&&\hspace{-0.3em}$\lbrack n{+}a{-}1,n{+}b\rbrack $\\
    & $\scriptstyle0$&$\lbrack a,b\rbrack $ &\hspace{-0.6em}$\lbrack n{-}b{-}1,n{-}a\rbrack $&$\lbrack n{-}b{-}1,n{-}a\rbrack $&$\lbrack n{+}a,n{+}b{+}1\rbrack $&\hspace{-1.6em}$\lbrack n{+}a{-}1,n{+}b{+}1\rbrack $\\
    &  $\scriptstyle{+}1$&$\lbrack a,b{+}1\rbrack $ & &$\lbrack n{-}b{-}1,n{-}a\rbrack $&&\hspace{-0.3em}$\lbrack n{+}a,n{+}b{+}1\rbrack $\\
\midrule
    \multirow{4}{*}{IV.} & $\scriptstyle{-}2$ &  &\hspace{-0.6em}$\lbrack n{-}b{-}1,n{-}a{-}1\rbrack $&  &$\lbrack n{+}a,n{+}b\rbrack $&\\
  & $\scriptstyle{-}1$  & &&$\lbrack n{-}b{-}1,n{-}a{-}1\rbrack $&&\hspace{-0.3em}$\lbrack n{+}a,n{+}b\rbrack $\\
    & $\scriptstyle0$&$\lbrack a,b\rbrack $ &\hspace{-0.6em}$\lbrack n{-}b{-}1,n{-}a{-}1\rbrack $&$\lbrack n{-}b{-}1,n{-}a{-}1\rbrack $&$\lbrack n{+}a{+}1,n{+}b{+}1\rbrack $&\hspace{-0.3em}$\lbrack n{+}a,n{+}b{+}1\rbrack $\\
    &  $\scriptstyle{+}1$&$\lbrack a{+}1,b{+}1\rbrack $ & &$\lbrack n{-}b{-}1,n{-}a{-}1\rbrack $ &&\hspace{-1.35em}$\!\lbrack n{+}a{+}1,n{+}b{+}1\rbrack $\\
\bottomrule
\end{tabular}
\end{center}
\end{footnotesize}
 \caption{Behavior of integer intervals under the maps $\mathbf{pairs}(n)$ and $\mathbf{PAIRS}(n)$.}\label{MVS}
\end{table}

\begin{ex}
 Let us take a look at the first case in Table~\ref{MVS}. We start with the two sets  $\mathbf{pairs}(n)=\lbrack a,b\rbrack$ and  $\mathbf{pairs}(n+1)=\lbrack a,b\rbrack$.
 Now we use Equation~(\ref{pairs1}) of Theorem~\ref{pairs} and
 get
 \[\mathbf{PAIRS}(2n)=n-\mathbf{pairs}(n+1)=n-\lbrack a,b\rbrack=\lbrack n-b,n-a\rbrack\]
 and
  \[\mathbf{PAIRS}(2n-2)=n-1-\mathbf{pairs}(n)=n-1-\lbrack a,b\rbrack=\lbrack n-b-1,n-a-1\rbrack.\]
  Then we use Theorem~\ref{pairs} Equation~(\ref{pairs2}) to get
  \[\mathbf{pairs}(2n-1)=\mathbf{PAIRS}(2n-2)=\lbrack n-b-1,n-a-1\rbrack\]
  and
   \[\mathbf{pairs}(2n+1)=\mathbf{PAIRS}(2n)=\lbrack n-b,n-a\rbrack.\]
   Finally we use   use Theorem~\ref{pairs} Equation~(\ref{pairs3}) and get
    \[\mathbf{pairs}(2n)=\mathbf{PAIRS}(2n-2)\cup\mathbf{PAIRS}(2n)=\lbrack n-b-1,n-a-1\rbrack\cup\lbrack n-b,n-a\rbrack=\]
    \[=\lbrack n-b-1,n-a\rbrack.\]
    We continue in the same manner to  complete Table~\ref{MVS}.
\end{ex}

From the way we made Table~\ref{MVS} we can also deduce the next lemma about consecutive values of $\mathbf{pairs}(n)$, $\mathbf{PAIRS}(n)$ and $\mathcal{P}_{n}$.
\begin{lem} \label{rec}
 If we know $\mathbf{pairs}(n)$ for all $n\in\lbrack a,b\rbrack $ we can determine $\mathbf{pairs}(n')$ and $\mathcal{P}_{n'}$ for $n'\in\lbrack 2a-1,2b-1\rbrack $
 and  $\mathbf{PAIRS}(n'')$ for $n''\in\lbrack 2a-2,2b-2\rbrack $.
\end{lem}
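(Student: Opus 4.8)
The plan is to recognize this as the interval version of the doubling step described in the paragraph just before the statement: there it is noted that knowing $\mathbf{pairs}(n)$ and $\mathbf{pairs}(n+1)$ suffices to produce $\mathbf{PAIRS}(2n-2),\mathbf{PAIRS}(2n)$, then $\mathbf{pairs}(2n-1),\mathbf{pairs}(2n),\mathbf{pairs}(2n+1)$, and finally $\mathcal{P}_{2n-1},\mathcal{P}_{2n},\mathcal{P}_{2n+1}$. So I would simply run the three recursions of Theorem~\ref{pairs} and the two formulas of Lemma~\ref{conclass} over the whole interval at once, checking at each application that every required input already lies in a range we have computed, and that the outputs exactly fill the claimed target intervals.

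First I would apply \eqref{pairs1}. Since $\mathbf{PAIRS}(2n)=n-\mathbf{pairs}(n+1)$ is determined whenever $n+1\in\lbrack a,b\rbrack$, i.e. $n\in\lbrack a-1,b-1\rbrack$, this yields $\mathbf{PAIRS}(m)$ for every (defined, i.e. even) $m\in\lbrack 2a-2,2b-2\rbrack$, which is precisely the claimed range for $\mathbf{PAIRS}$. Next I would read off the odd values of $\mathbf{pairs}$ from \eqref{pairs2}: $\mathbf{pairs}(2n+1)=\mathbf{PAIRS}(2n)$ is available exactly when $2n\in\lbrack 2a-2,2b-2\rbrack$, i.e. $n\in\lbrack a-1,b-1\rbrack$, giving $\mathbf{pairs}(m)$ for every odd $m\in\lbrack 2a-1,2b-1\rbrack$. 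Then the even values come from \eqref{pairs3}: $\mathbf{pairs}(2n)=\mathbf{PAIRS}(2n)\cup\mathbf{PAIRS}(2n-2)$ needs both $\mathbf{PAIRS}(2n)$ and $\mathbf{PAIRS}(2n-2)$ known, i.e. $n\in\lbrack a,b-1\rbrack$, producing $\mathbf{pairs}(m)$ for every even $m\in\lbrack 2a,2b-2\rbrack$. Together the odd and even cases cover all of $\lbrack 2a-1,2b-1\rbrack$.

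Finally I would obtain the complexities from Lemma~\ref{conclass}. For odd $n'=2n+1\in\lbrack 2a-1,2b-1\rbrack$ the formula for $\mathcal{P}_{2n+1}$ uses only $\mathbf{PAIRS}(2n)$, which lies in $\lbrack 2a-2,2b-2\rbrack$ and is already in hand; for even $n'=2n\in\lbrack 2a,2b-2\rbrack$ the formula for $\mathcal{P}_{2n}$ uses $\mathbf{PAIRS}(2n)$ and $\mathbf{PAIRS}(2n-2)$, both of which lie in $\lbrack 2a-2,2b-2\rbrack$. Hence $\mathcal{P}_{n'}$ is determined for every $n'\in\lbrack 2a-1,2b-1\rbrack$. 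The whole argument is bookkeeping; the only point needing care — the nominal ``obstacle'' — is checking that the input ranges of the three recursions interlock so that the union of outputs is exactly $\lbrack 2a-1,2b-1\rbrack$ with no gap at the ends: note that \eqref{pairs3} reaches only the even value $2b-2$ at the top, while the odd case via \eqref{pairs2} supplies $2b-1$, and at the bottom $2a-1$ (odd) and $2a$ (even, with $n=a$) are both covered, so no endpoint is missed.
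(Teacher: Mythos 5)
Your proposal is correct and follows essentially the same route as the paper, which derives Lemma~\ref{rec} directly from the doubling step of Theorem~\ref{pairs} and Lemma~\ref{conclass} described in the preceding paragraph (the paper leaves the proof implicit, appealing to the construction of Table~\ref{MVS}). Your explicit range bookkeeping --- odd values of $\mathbf{pairs}$ filling $\lbrack 2a-1,2b-1\rbrack$ via \eqref{pairs2} and even values filling $\lbrack 2a,2b-2\rbrack$ via \eqref{pairs3}, with $\mathbf{PAIRS}$ on $\lbrack 2a-2,2b-2\rbrack$ from \eqref{pairs1} --- is accurate and merely spells out what the paper takes for granted.
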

\begin{rem}
 An especially interesting case of Lemma~\ref{rec} is to go from $\mathbf{pairs}(n)$, $n\in\lbrack 2^{q-1}+1,2^q+1\rbrack $ to $\mathbf{pairs}(n')$, $n'\in\lbrack 2^q+1;2^{q+1}+1\rbrack $. This is
 the idea behind the proof of Theorem~\ref{pal}.
\end{rem}

\section{ The sequence $\mathcal{P}_n$ is $2$-regular}\label{2reg}

We will prove Theorem~\ref{main}  by proving the $13$ relations of Theorem 1.1.* which generate all sequences for the $\mathbb{Z}$-kernel. If Theorem 1.1.* is true
the $\mathbb{Z}$-kernel is finitely generated and Theorem~\ref{main} follows.

\begin{customthm}{1.1.*}
The  $\mathbb{Z}$-kernel of the Thue-Morse word is generated by the $13$ relations:

 \begin{enumerate}
\item $\mathcal{P}_{4n+1}=\mathcal{P}_{2n+1}$
\item $\mathcal{P}_{8n+4}=\mathcal{P}_{8n+3}+\mathcal{P}_{4n+3}-\mathcal{P}_{4n+2}$
\item $\mathcal{P}_{16n}=\mathcal{P}_{8n}$
\item $\mathcal{P}_{16n+2}=\mathcal{P}_{8n+2}$
\item $\mathcal{P}_{16n+6}=-\mathcal{P}_{16n+3}+\mathcal{P}_{8n+3}+3\mathcal{P}_{8n+2}+\mathcal{P}_{4n+3}-2\mathcal{P}_{4n+2}-\mathcal{P}_{2n+1}$
\item $\mathcal{P}_{16n+7}=-\mathcal{P}_{16n+3}+\mathcal{P}_{8n+3}+3\mathcal{P}_{8n+2}+2\mathcal{P}_{4n+3}-3\mathcal{P}_{4n+2}-\mathcal{P}_{2n+1}$
\item $\mathcal{P}_{16n+8}=\mathcal{P}_{8n+2}+\mathcal{P}_{4n+3}-\mathcal{P}_{2n+1}$
\item $\mathcal{P}_{16n+10}=\mathcal{P}_{8n+2}+\mathcal{P}_{4n+3}-\mathcal{P}_{2n+1}$
\item $\mathcal{P}_{16n+11}=-\mathcal{P}_{16n+3}+3\mathcal{P}_{8n+2}+\mathcal{P}_{4n+3}-2\mathcal{P}_{2n+1}$
\item $\mathcal{P}_{16n+14}=\mathcal{P}_{16n+3}+\mathcal{P}_{8n+7}-\mathcal{P}_{8n+3}-\mathcal{P}_{8n+2}-\mathcal{P}_{4n+3}+3\mathcal{P}_{4n+2}-\mathcal{P}_{2n+1}$
\item $\mathcal{P}_{16n+15}=\mathcal{P}_{16n+3}+2\mathcal{P}_{8n+7}-3\mathcal{P}_{8n+6}-2\mathcal{P}_{8n+3}+6\mathcal{P}_{4n+2}-3\mathcal{P}_{2n+1}$
\item $\mathcal{P}_{32n+3}=\mathcal{P}_{8n+3}$
\item $\mathcal{P}_{32n+19}=-\mathcal{P}_{16n+3}+\mathcal{P}_{8n+3}+3\mathcal{P}_{8n+2}+2\mathcal{P}_{4n+3}-3\mathcal{P}_{4n+2}-\mathcal{P}_{2n+1}$
\end{enumerate}
\end{customthm}

\begin{proof}
If we observe the right hand side of these $13$ relations we see that every sequence $\mathcal{P}_{2^qn+r}$
is a linear combination of $\mathcal{P}_{2n+1}$, $\mathcal{P}_{4n+2}$, $\mathcal{P}_{4n+3}$, $\mathcal{P}_{8n}$, $\mathcal{P}_{8n+2}$, $\mathcal{P}_{8n+3}$, $\mathcal{P}_{8n+6}$,
$\mathcal{P}_{8n+7}$ and $\mathcal{P}_{16n+3}$.

To see that these $13$ relations generate all sequences for the $\mathbb{Z}$-kernel we just have to check that the left hand side of this $13$ relations cover all 
residue classes
modulo $32$. Please note that we have to write all relations modulo 32, so e.g. instead of $\mathcal{P}_{16n+7}$ we write $\mathcal{P}_{16(2n)+7}=\mathcal{P}_{32n+7}$ and
$\mathcal{P}_{16(2n+1)+7}=\mathcal{P}_{32n+23}$.
\end{proof}

 These relations have been found by computer experiments by Parreau, Rigo and Vandomme. An algorithm for finding such relations for an $\ell$-abelian sequence
 can be found in Chapter 6 of \cite{allouche2003}. We will prove the relations one by one by a  four step approach. 
 
 In this four step approach  Table~\ref{values} will be used in the second and third step\footnote{For Relation 3 we need a different table.}. It has been generated in the same manner as Table~\ref{MVS} starting with all three possibilities for the relative 
  sizes of $\mathbf{pairs}(n{+}1)$ and $\mathbf{pairs}(n{+}2)$.
  
  So let us now describe the four steps in detail, we will need them over and over again for all thirteen relations.
 
 \begin{description}
\item[First step]  We  move all terms of a relation to the right hand side and replace them there using Lemma~\ref{conclass}. 
We then  divide
  the whole equation by $2$ (as consequence of
  Lemma~\ref{conclass} all $\mathcal{P}_n$ are even for $n>0$).

  \begin{ex}[Relation 1]
   To make the steps clearer we will prove Relation~1 as example. To prove $\mathcal{P}_{4n+1}=\mathcal{P}_{2n+1}$ we move all terms to the right side.
   \[0=-\mathcal{P}_{4n+1}+\mathcal{P}_{2n+1}\]
   Now we use Lemma~\ref{conclass} to replace them and get
   \begin{small}
\[0={-}2(2\#\mathbf{PAIRS}(4n){-}\#_2\mathbf{PAIRS}(4n)){+}2(2\#\mathbf{PAIRS}(2n){-}\#_2\mathbf{PAIRS}(2n)).\]
\end{small}
Finally divide this equation by $2$ to get
\begin{small}
\[0={-}(2\#\mathbf{PAIRS}(4n){-}\#_2\mathbf{PAIRS}(4n)){+}(2\#\mathbf{PAIRS}(2n){-}\#_2\mathbf{PAIRS}(2n)).\]
\end{small}
  \end{ex}

\item[Second step]  We use Table~\ref{values} to  substitute the  term $\#\mathbf{PAIRS}(.)$ by cardinalities of suitably chosen integer intervals
(we keep $\#_2\mathbf{PAIRS}(.)$).

  Since there are three cases we will now
  have three equations.  We  calculate the cardinalities on the right hand side in all three cases, simplify until we have a single integer 
  and put the result into a triple $(\rhs_1,\rhs_2,\rhs_3)$,
  where $\rhs_i$ is the integer on the right hand side in the $i$-th case.
  
\begin{ex}[Relation 1, continued]
We start with
\begin{small}
 \[0={-}(2\#\mathbf{PAIRS}(4n){-}\#_2\mathbf{PAIRS}(4n)){+}(2\#\mathbf{PAIRS}(2n){-}\#_2\mathbf{PAIRS}(2n)).\] 
\end{small}
We look at case I. of Table~\ref{values} and substitute  $\#\mathbf{PAIRS}(2n)=\#\lbrack n{-}b,n{-}a\rbrack $ and  $\#\mathbf{PAIRS}(4n)=\#\lbrack n{+}a,n{+}b\rbrack $ to get
 \[0={-}2\#\lbrack n{+}a,n{+}b\rbrack {+}\#_2\mathbf{PAIRS}(4n){+}2\#\lbrack n{-}b,n{-}a\rbrack {-}\#_2\mathbf{PAIRS}(2n).\]
 Now we calculate the cardinalities. Since $\#\lbrack a,b\rbrack =b-a+1$ we get
 \[0={-}2(b-a+1){+}\#_2\mathbf{PAIRS}(4n){+}2(b-a+1){-}\#_2\mathbf{PAIRS}(2n).\]
 This can be simplified to \[0=\#_2\mathbf{PAIRS}(4n){-}\#_2\mathbf{PAIRS}(2n)\] so the integer in case I. is $0$. The other two cases are identical and we get
 \[0=\#_2\mathbf{PAIRS}(4n){-}\#_2\mathbf{PAIRS}(2n)+(0,0,0).\]
 This is a shorthand for three (identical) equations. Most times two or even three cases will be identical.
\end{ex}

\item[Third step] We will use Table~\ref{values} again to substitute the  cardinalities $\#_2\mathbf{PAIRS}(.)$ in all three cases and intersect them if necessary.

\begin{ex}[Relation 1, continued]
In all three cases we get
 \[0=\#_2\lbrack n{+}a,n{+}b\rbrack {-}\#_2\lbrack n{-}b,n{-}a\rbrack .\]
\end{ex}

\item[Fourth step]  Now we deal with the number of even elements in integer intervals. If two intervals of the same size with different signs contain the same
  number of even elements they cancel. 
  If the do not cancel  (because their borders have the wrong parity) or have different sizes we  split off Iverson brackets
from the beginning or end of the larger set until both sets have the same parity of limits and the same cardinality
\[\#_2\lbrack a,b\rbrack =\llbracket a \text{ even} \rrbracket+\#_2\lbrack a{+}1,b\rbrack ; \quad \#_2\lbrack a,b\rbrack =\#_2\lbrack a,b{-}1\rbrack +\llbracket b \text{ even} \rrbracket.\]

To check two intervals of the same size with different signs contain the same
  number of even elements we 
 use the following procedure, which we call \emph{normalization}:
\begin{itemize}
    \item Replace all even numbers by $0$ and all odd numbers by $1$.
    \item Change all ``$-$'' signs to ``$+$'' signs.
    \item If an $a$ occurs in the upper limit of a set we change the upper and lower limit.
   \end{itemize}
   
Its easy to see that parity of  the interval borders does not chance during the procedure, basically we calculate modulo $2$. It does however change the size of the 
intervals so it is important to use normalization only on intervals of the same size.

 \begin{ex}
   We want to show that 
   \[0=-\#_2\lbrack 5n+a+1,5n+b+2\rbrack +\#_2\lbrack 3n-b,3n-a+1\rbrack .\]
   Both sides have the same  cardinality $b-a+2$, therefore we can normalize them:
   \begin{align*}
      -\#_2\lbrack 5n{+}a{+}1,5n{+}b{+}2\rbrack +\#_2\lbrack 3n{-}b,3n{-}a{+}1\rbrack &=\\
   =-\#_2\lbrack n{+}a{+}1,n{+}b\rbrack +\#_2\lbrack n{-}b,n{-}a{+}1\rbrack &=\\
   =-\#_2\lbrack n{+}a{+}1,n{+}b\rbrack +\#_2\lbrack n{+}b,n{+}a{+}1\rbrack &=\\
    =-\#_2\lbrack n{+}a{+}1,n{+}b\rbrack +\#_2\lbrack n{+}a{+}1,n{+}b\rbrack &=0.
   \end{align*}
  \end{ex}
  
  \begin{ex}[Relation 1, continued] In all three cases we have
   \[0=\#_2\lbrack n{+}a,n{+}b\rbrack {-}\#_2\lbrack n{-}b,n{-}a\rbrack .\]
   Since the sets have the same cardinality so we normalize and get
    \[0=\#_2\lbrack n{+}a,n{+}b\rbrack {-}\#_2\lbrack n{-}b,n{-}a\rbrack =\]
        \[=\#_2\lbrack n{+}a,n{+}b\rbrack {-}\#_2\lbrack n{+}b,n{+}a\rbrack =\]
            \[=\#_2\lbrack n{+}a,n{+}b\rbrack {-}\#_2\lbrack n{+}a,n{+}b\rbrack \]
            which is true.
    
  \end{ex}

   \end{description}

      The only non mechanical step in the whole procedure is step four. Sometimes we have to split intervals before we can normalize. This will be mentioned in the proof.
After these four steps  we will have an  equation which is trivially true.
 
 \begin{proof}[Proof of Theorem~\ref{main}.]
 \hspace{3pt}
\begin{enumerate}
\item This is shown in the example.
\item We will show the first relation in more detail. We want to prove
\[ 0=-\mathcal{P}_{8n+4}+\mathcal{P}_{8n+3}+\mathcal{P}_{4n+3}-\mathcal{P}_{4n+2}. \]
In the first step we use  Lemma~\ref{conclass} and divide by $2$ to get
\begin{footnotesize}
\begin{align*}
 0=&{-}\#\mathbf{PAIRS}(8n{+}4){-}\#\mathbf{PAIRS}(8n{+}2){+}\#_2(\mathbf{PAIRS}(8n{+}4)\cap\mathbf{PAIRS}(8n{+}2))\\
&{+}2\#\mathbf{PAIRS}(8n{+}2){-}\#_2\mathbf{PAIRS}(8n{+}2){+}2\#\mathbf{PAIRS}(4n{+}2){-}\#_2\mathbf{PAIRS}(4n{+}2)\\
&{-}\#\mathbf{PAIRS}(4n{+}2){-}\#\mathbf{PAIRS}(4n){+}\#_2(\mathbf{PAIRS}(4n{+}2)\cap\mathbf{PAIRS}(4n)).
\end{align*}
\end{footnotesize}
For the second step we substitute the intervals from Table~\ref{values}.
With the shorthand $\mathcal{C}:=b-a+1$ we get in the first case  (where $\mathbf{pairs}(n+1)=\lbrack a,b\rbrack $ and  $\mathbf{pairs}(n+2)=\lbrack a,b\rbrack $)
\begin{small}
\begin{align*}
 0=&{-}2\mathcal{C}{-}2{+}\#_2(\mathbf{PAIRS}(8n{+}4)\cap\mathbf{PAIRS}(8n{+}2)){+}2\mathcal{C}{+}2{-}\#_2\mathbf{PAIRS}(8n{+}2)\\
&{+}2\mathcal{C}{+}2{-}\#_2\mathbf{PAIRS}(4n{+}2){-}2\mathcal{C}{-}1{+}\#_2(\mathbf{PAIRS}(4n{+}2)\cap\mathbf{PAIRS}(4n)).  
\end{align*} 
\end{small}
Most terms cancel and we have $1$ remaining on the right hand side. It turns out this is also true in the two other cases so our triple is $(1,1,1)$ and we get
\begin{align*}\label{8k+4}
0=&+\#_2(\mathbf{PAIRS}(8n{+}4)\cap\mathbf{PAIRS}(8n{+}2))-\#_2\mathbf{PAIRS}(8n{+}2)\\
&-\#_2\mathbf{PAIRS}(4n{+}2)+\#_2(\mathbf{PAIRS}(4n{+}2)\cap\mathbf{PAIRS}(4n))+(1,1,1).
 \end{align*}
Now we apply the third step, intersect the sets and get in all three cases
\begin{align*}
0=&+\#_2\lbrack 3n{-}b{+}1,3n{-}a{+}1\rbrack -\#_2\lbrack 3n{-}b,3n{-}a{+}1\rbrack \\
 &-\#_2\lbrack n{+}a,n{+}b{+}1\rbrack +\#_2\lbrack n{+}a,n{+}b\rbrack +1.
\end{align*}
In the fourth step we can not normalize, so we split off Iverson brackets and  get
\begin{align*}
0=&+\#_2\lbrack 3n{-}b{+}1,3n{-}a{+}1\rbrack -\llbracket 3n-b\text{ even} \rrbracket-\#_2\lbrack 3n{-}b{+}1,3n{-}a{+}1\rbrack \\
 &-\#_2\lbrack n{+}a,n{+}b\rbrack -\llbracket n+b+1\text{ even} \rrbracket+\#_2\lbrack n{+}a,n{+}b\rbrack +1.
\end{align*}
The intervals cancel and we are left with the true equation
\begin{equation*}
0=-\llbracket 3n-b\text{ even} \rrbracket - \llbracket n+b+1\text{ even} \rrbracket+1,
\end{equation*}
in all three cases.

\item We can get the values needed if we extend Table~\ref{MVS}.

\begin{tabular}[h]{rll}
\toprule
i & $\mathbf{PAIRS}(8n{+}i)$  & $\mathbf{PAIRS}(16n{+}i)$ \\
 \midrule
 $-2$& $\lbrack 3n{-}b{-}1,3n{-}a\rbrack $ & $\lbrack 5n{+}a{-}1,5n{+}b\rbrack $ \\
 $0$& $\lbrack 3n{-}b,3n{-}a\rbrack $ & $\lbrack 5n{+}a,5n{+}b\rbrack $\\
  \midrule
   $-2$& $\lbrack 3n{-}b{-}1,3n{-}a{-}1\rbrack $ & $\lbrack 5n{+}a,5n{+}b\rbrack $ \\
 $0$& $\lbrack 3n{-}b,3n{-}a{-}1\rbrack $ & $\lbrack 5n{+}a{+}1,5n{+}b\rbrack $\\
  \midrule
   $-2$& $\lbrack 3n{-}b{-}2,3n{-}a\rbrack $ & $\lbrack 5n{+}a{-}1,5n{+}b{+}1\rbrack $ \\
 $0$& $\lbrack 3n{-}b{-}1,3n{-}a\rbrack $ & $\lbrack 5n{+}a,5n{+}b{+}1\rbrack $\\
 \bottomrule
\end{tabular}

 The triple is $(0,0,0)$. The rest is a straightforward. In the fourth step we just have to normalize. 

\item Similar to relation 3. but with values from Table~\ref{values}.  The triple is $(0,0,0)$ and  we just have to normalize. 

\begin{sidewaystable}[p]
 \setlength{\tabcolsep}{4pt}
\begin{center}
\begin{tabular}[h]{cccccccc}
\toprule
Case &$i$ & $\mathbf{pairs}(n{+}i)$ & $\mathbf{PAIRS}(2n{+}i)$ & $\mathbf{PAIRS}(4n{+}i)$& $\mathbf{PAIRS}(8n{+}i)$   &$\mathbf{PAIRS}(16n{+}i)$ & $\mathbf{PAIRS}(32n{+}i)$    \\
 \midrule
\multirow{11}{*}{I.}& 0 &      & $\lbrack n{-}b,n{-}a\rbrack $    & $\lbrack n{+}a,n{+}b\rbrack $     & $\lbrack 3n{-}b,3n{-}a\rbrack $     &$\lbrack 5n{+}a,5n{+}b\rbrack $    &$\lbrack 11n{-}b,11n{-}a\rbrack $  \\
& 1 & $\lbrack a,b\rbrack $         &   &       &   &       &      \\
 &2 & $\lbrack a,b\rbrack $  &$\lbrack n{-}b{+}1,n{-}a{+}1\rbrack $   &$\lbrack n{+}a,n{+}b{+}1\rbrack $   & $\lbrack 3n{-}b,3n{-}a{+}1\rbrack $  &$\lbrack 5n{+}a,5n{+}b{+}1\rbrack $ &$\lbrack 11n{-}b,11n{-}a{+}1\rbrack $ \\
& 4 &   &            & $\lbrack n{+}a{+}1,n{+}b{+}1\rbrack $  & $\lbrack 3n{-}b{+}1,3n{-}a{+}2\rbrack $   & $\lbrack 5n{+}a{+}1,5n{+}b{+}2\rbrack $  & $\lbrack 11n{-}b{+}1,11n{-}a{+}2\rbrack $ \\
 &6 &   &                  &   & $\lbrack 3n{-}b{+}2,3n{-}a{+}3\rbrack $  & $\lbrack 5n{+}a{+}1,5n{+}b{+}3\rbrack $   &$\lbrack 11n{-}b{+}1,11n{-}a{+}3\rbrack $\\
 &8 &   &              &   & $\lbrack 3n{-}b{+}3,3n{-}a{+}3\rbrack $  & $\lbrack 5n{+}a{+}2,5n{+}b{+}3\rbrack $  & $\lbrack 11n{-}b{+}2,11n{-}a{+}3\rbrack $\\
 &10 &   &                     &   &   &$\lbrack 5n{+}a{+}2,5n{+}b{+}4\rbrack $   & $\lbrack 11n{-}b{+}2,11n{-}a{+}4\rbrack $ \\
 &12 &   &                      &   &   & $\lbrack 5n{+}a{+}3,5n{+}b{+}4\rbrack $ &$\lbrack 11n{-}b{+}3,11n{-}a{+}5\rbrack $   \\
 &14 &   &                      &   &   &$\lbrack 5n{+}a{+}4,5n{+}b{+}5\rbrack $  &$\lbrack 11n{-}b{+}4,11n{-}a{+}6\rbrack $ \\
& 16 &   &                       &   &   & $\lbrack 5n{+}a{+}5,5n{+}b{+}5\rbrack $  & $\lbrack 11n{-}b{+}5,11n{-}a{+}6\rbrack $  \\
 &18 &   &                     &   &   &   &  $\lbrack 11n{-}b{+}5,11n{-}a{+}7\rbrack $  \\
 \midrule
\multirow{11}{*}{II.}& 0 &      &$\lbrack n{-}b,n{-}a\rbrack $     &$\lbrack n{+}a,n{+}b\rbrack $     &$\lbrack 3n{-}b,3n{-}a\rbrack $      & $\lbrack 5n{+}a,5n{+}b\rbrack $   &$\lbrack 11n{-}b,11n{-}a\rbrack $ \\
& 1 & $\lbrack a,b\rbrack $         &   &       &   &       &      \\
& 2 & $\lbrack a{+}1,b\rbrack $  & $\lbrack n{-}b{+}1,n{-}a\rbrack $  & $\lbrack n{+}a{+}1,n{+}b{+}1\rbrack $   & $\lbrack 3n{-}b,3n{-}a{+}1\rbrack $  & $\lbrack 5n{+}a,5n{+}b{+}1\rbrack $   & $\lbrack 11n{-}b,11n{-}a{+}1\rbrack $  \\
& 4 &   &            &$\lbrack n{+}a{+}2,n{+}b{+}1\rbrack $  & $\lbrack 3n{-}b{+}1,3n{-}a{+}1\rbrack $   &$\lbrack 5n{+}a{+}1,5n{+}b{+}2\rbrack $  &$\lbrack 11n{-}b{+}1,11n{-}a{+}2\rbrack $  \\
& 6 &   &             &     & $\lbrack 3n{-}b{+}2,3n{-}a{+}2\rbrack $  &$\lbrack 5n{+}a{+}2,5n{+}b{+}3\rbrack $  & $\lbrack 11n{-}b{+}1,11n{-}a{+}3\rbrack $ \\
& 8 &   &                &   & $\lbrack 3n{-}b{+}3,3n{-}a{+}2\rbrack $   &$\lbrack 5n{+}a{+}3,5n{+}b{+}3\rbrack $   &$\lbrack 11n{-}b{+}2,11n{-}a{+}3\rbrack $  \\
& 10 &   &                &        &   &$\lbrack 5n{+}a{+}3,5n{+}b{+}4\rbrack $  &$\lbrack 11n{-}b{+}2,11n{-}a{+}4\rbrack $ \\
 &12 &   &               &         &   & $\lbrack 5n{+}a{+}4,5n{+}b{+}4\rbrack $ & $\lbrack 11n{-}b{+}3,11n{-}a{+}4\rbrack $ \\
 &14 &   &                      &   &   &$\lbrack 5n{+}a{+}5,5n{+}b{+}5\rbrack $  & $\lbrack 11n{-}b{+}4,11n{-}a{+}5\rbrack $ \\
 &16 &   &                      &   &   & $\lbrack 5n{+}a{+}6,5n{+}b{+}5\rbrack $  & $\lbrack 11n{-}b{+}5,11n{-}a{+}5\rbrack $  \\
 &18 &   &                    &   &   &   &  $\lbrack 11n{-}b{+}5,11n{-}a{+}6\rbrack $  \\
 \midrule
\multirow{11}{*}{III.}& 0 &      & $\lbrack n{-}b,n{-}a\rbrack $    &$\lbrack n{+}a,n{+}b\rbrack $    & $\lbrack 3n{-}b,3n{-}a\rbrack $    & $\lbrack 5n{+}a,5n{+}b\rbrack $    &$\lbrack 11n{-}b,11n{-}a\rbrack $\\
 &1 & $\lbrack a,b\rbrack $        &   &   &       &       &    \\
& 2 & $\lbrack a,b{+}1\rbrack $  &$\lbrack n{-}b,n{-}a{+}1\rbrack $   &$\lbrack n{+}a,n{+}b{+}1\rbrack $   &$\lbrack 3n{-}b,3n{-}a{+}1\rbrack $  &$\lbrack 5n{+}a,5n{+}b{+}1\rbrack $    & $\lbrack 11n{-}b,11n{-}a{+}1\rbrack $ \\
& 4 &   &            & $\lbrack n{+}a{+}1,n{+}b{+}2\rbrack $   &$\lbrack 3n{-}b{+}1,3n{-}a{+}2\rbrack $    & $\lbrack 5n{+}a{+}1,5n{+}b{+}2\rbrack $  &$\lbrack 11n{-}b{+}1,11n{-}a{+}2\rbrack $ \\
& 6 &   &               &   & $\lbrack 3n{-}b{+}1,3n{-}a{+}3\rbrack $  & $\lbrack 5n{+}a{+}1,5n{+}b{+}3\rbrack $ &$\lbrack 11n{-}b{+}1,11n{-}a{+}3\rbrack $ \\
& 8 &   &                 &   & $\lbrack 3n{-}b{+}2,3n{-}a{+}3\rbrack $  & $\lbrack 5n{+}a{+}2,5n{+}b{+}3\rbrack $   & $\lbrack 11n{-}b{+}2,11n{-}a{+}3\rbrack $\\
&10 &   &                    &   &   & $\lbrack 5n{+}a{+}2,5n{+}b{+}4\rbrack $ &$\lbrack 11n{-}b{+}2,11n{-}a{+}4\rbrack $ \\
& 12 &   &                    &   &   & $\lbrack 5n{+}a{+}3,5n{+}b{+}5\rbrack $  & $\lbrack 11n{-}b{+}3,11n{-}a{+}5\rbrack $  \\
& 14 &   &                     &   &   &$\lbrack 5n{+}a{+}4,5n{+}b{+}6\rbrack $   &$\lbrack 11n{-}b{+}4,11n{-}a{+}6\rbrack $  \\
& 16 &   &                    &   &   & $\lbrack 5n{+}a{+}5,5n{+}b{+}6\rbrack $  & $\lbrack 11n{-}b{+}5,11n{-}a{+}6\rbrack $ \\
& 18 &   &                   &   &   &    &$\lbrack 11n{-}b{+}5,11n{-}a{+}7\rbrack $  \\
\bottomrule
\end{tabular}
\end{center}
 \caption{Values for the proof of Theorem~\ref{main}.} \label{values}
\end{sidewaystable}

\item After the first two steps we get the triple $(0,1,0)$. Please note that the sets in cases I. and III. are identical, so we can ignore case III. In the first case we have
\begin{small}
\begin{align*}
0=&-3\overbrace{\#_2\lbrack 3n{-}b,3n{-}a\rbrack }^{8n+2}+2\overbrace{\#_2\lbrack n{+}a,n{+}b\rbrack }^{4n+2}+\overbrace{\#_2\lbrack n{-}b,n{-}a\rbrack }^{2n+1}\\
&-\overbrace{\#_2\lbrack n{+}a,n{+}b{+}1\rbrack }^{4n+3}+\overbrace{\#_2\lbrack 5n{+}a,5n{+}b{+}1\rbrack }^{16n+3}\\
&+\overbrace{\#_2\lbrack 5n{+}a{+}1,5n{+}b{+}2\rbrack }^{16n+6}-\overbrace{\#_2\lbrack 3n{-}b,3n{-}a{+}1\rbrack }^{8n+3}
\end{align*}
\end{small}
where all terms in a single line normalize to zero.

In the second case we have
\begin{small}
\begin{align*}
0=&+\overbrace{\#_2\lbrack 5n{+}a{+}2,5n{+}b{+}2\rbrack }^{16n+6}+\overbrace{\#_2\lbrack n{-}b,n{-}a\rbrack }^{2n+1}-2\overbrace{\#_2\lbrack 3n{-}b,3n{-}a\rbrack }^{8n+2}\\
&-\overbrace{\#_2\lbrack 3n{-}b,3n{-}a\rbrack }^{8n+2}+2\overbrace{\#_2\lbrack n{+}a{+}1,n{+}b\rbrack }^{4n+2}-\overbrace{\#_2\lbrack 3n{-}b,3n{-}a{+}1\rbrack }^{8n+3}\\
&-\overbrace{\#_2\lbrack n{+}a{+}1,n{+}b{+}1\rbrack }^{4n+3}+\overbrace{\#_2\lbrack 5n{+}a,5n{+}b{+}1\rbrack }^{16n+3}+1
\end{align*}
\end{small}
where all terms in the first line normalize to zero. We then split off Iverson brackets to get
\begin{small}
\begin{align*}
0=&-\#_2\lbrack 3n{-}b,3n{-}a\rbrack +\#_2\lbrack 5n{+}a,5n{+}b\rbrack +\llbracket 5n{+}b{+}1 \text{ even} \rrbracket\\
&-\#_2\lbrack n{+}a{+}1,n{+}b\rbrack -\llbracket n{+}b{+}1 \text{ even} \rrbracket+\#_2\lbrack n{+}a{+}1,n{+}b\rbrack \\
&-\#_2\lbrack 3n{-}b,3n{-}a{+}1\rbrack +\#_2\lbrack n{+}a{+}1,n{+}b\rbrack +1.
\end{align*}
\end{small}
The first two lines cancel. We split the first interval two times and get
\begin{small}
\[-\#_2\lbrack 3n{-}b,3n{-}a{-}1\rbrack  -\llbracket 3n{-}a \text{ even} \rrbracket-\llbracket 3n{-}a{+}1 \text{ even} \rrbracket +\#_2\lbrack n{+}a{+}1,n{+}b\rbrack +1,\]
\end{small}
and finally
\begin{small}
\[
0=-\llbracket 3n{-}a \text{ even} \rrbracket-\llbracket 3n{-}a{+}1 \text{ even} \rrbracket+1.
\]
\end{small}

\item We already know that the Relation 5. is true so we subtract it from Relation 6. to get:
\[\mathcal{P}_{16n+7}-\mathcal{P}_{16n+6}=\mathcal{P}_{4n+3}-\mathcal{P}_{4n+2}.\]
Now we follow the usual procedure.  The triple is $(0,0,0,)$ and we have to split intervals before we can normalize to zero.

\item Straightforward.  The triple is $(0,0,0,)$ and we can normalize to zero.
\item We subtract Relation 7. from Relation 8. we get $\mathcal{P}_{16n+10}-\mathcal{P}_{16n+8}=0$. Again  the triple is $(0,0,0,)$ and we can normalize to zero.
\item If we subtract Relation 8. from Relation 9. we get:
\[\mathcal{P}_{16n+11}-\mathcal{P}_{16n+10}=-\mathcal{P}_{16n+3}+2\mathcal{P}_{8n+2}-\mathcal{P}_{2n+1}.\]
Our triple is $(-1,-1,-1)$, we have to split  and in all three cases we get
\[0=\llbracket 5n{+}b{+}4 \text{ even} \rrbracket+\llbracket 5n{+}b{+}1 \text{ even} \rrbracket-1 \]
as final result.
\item The triple is $(0,-1,0)$. The rest of the calculation is lengthy, since for the first time all three cases are  different, but not too hard. We have to split
in the second and third case.
\item Another long calculation. Our triple is $(0,-2,-1)$.  We have to split
in the second and third case. We show as an example the fourth step in the second case:
\begin{small}
\begin{align*}
0=&-2\overbrace{\#_2\lbrack 3n{-}b{+}2,3n{-}a{+}2\rbrack }^{8n+7}+2\overbrace{\#_2\lbrack n{-}b,n{-}a\rbrack }^{2n+1}\\
&+3\overbrace{\#_2\lbrack 3n{-}b{+}2,3n{-}a{+}1\rbrack }^{8n+6}-3\overbrace{\#_2\lbrack n{+}a{+}1,n{+}b\rbrack }^{4n+2}\\
&+2\overbrace{\#_2\lbrack 3n{-}b,3n{-}a{+}1\rbrack }^{8n+3}-3\overbrace{\#_2\lbrack n{+}a{+}1,n{+}b\rbrack }^{4n+2}+\overbrace{\#_2\lbrack 5n{+}a{+}5,5n{+}b{+}5\rbrack }^{16n+15}\\
&+\overbrace{\#_2\lbrack n{-}b,n{-}a\rbrack }^{2n+1}-\overbrace{\#_2\lbrack 5n{+}a,5n{+}b{+}1\rbrack }^{16n+3}-2
\end{align*}
\end{small}
The first two lines cancels and we can start to split intervals.
\begin{small}
\begin{align*}
0=&\#_2\lbrack 5n{+}a{+}5,5n{+}b{+}5\rbrack -\#_2\lbrack 5n{+}a{+}1,5n{+}b{+}1\rbrack -\llbracket 5n{+}a\text{ even} \rrbracket\\
&+\#_2\lbrack n{-}b,n{-}a{-}1\rbrack +\llbracket n{-}a\text{ even} \rrbracket-\#_2\lbrack n{+}a{+}1,n{+}b\rbrack \\
&+2\#_2\lbrack 3n{-}b,3n{-}a{+}1\rbrack -2\#_2\lbrack n{+}a{+}1;n{+}b\rbrack -2
\end{align*}
\end{small}
The first two lines cancel and after splitting the first interval in the last line two times we get the true equation
\begin{small}
\[0=+2\llbracket 3n{-}a{+}1\text{ even} \rrbracket+2\llbracket 3n{-}a\text{ even} \rrbracket-2.\]

\end{small}
\item  Straightforward.  The triple is $(0,0,0)$ and we can normalize to zero.
\item We subtract Relation 6. from Relation 13. to get
\[\mathcal{P}_{32n+19}-\mathcal{P}_{16n+7}=0.\]
The triple is $(0,0,0)$. We normalize to zero.
\end{enumerate}
\end{proof}

\section{Properties of $\mathcal{P}_n$}\label{eigen}

In this section we show three additional properties of  $\mathcal{P}_n$. In the first lemma we show that $\mathcal{P}_{n}$ changes in steps of $2$ (for $n>0$).

\begin{lem}\label{dist} For $n\geq 4$ we have
 \[\mathcal{P}_{n+1}-\mathcal{P}_{n}\in \{-2,0,2\}.\]
\end{lem}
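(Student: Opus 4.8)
The plan is to reduce everything to the two formulas of Lemma~\ref{conclass} together with the fact, noted in Section~\ref{2reg}, that every $\mathcal{P}_n$ with $n>0$ is even (the right-hand sides in Lemma~\ref{conclass} carry a factor $2$). The evenness already makes $\mathcal{P}_{n+1}-\mathcal{P}_n$ an even integer, so the whole content of the lemma is the bound $|\mathcal{P}_{n+1}-\mathcal{P}_n|\le 2$. I would split according to the parity of $n$ and treat the two transitions $\mathcal{P}_{2m+1}-\mathcal{P}_{2m}$ and $\mathcal{P}_{2m}-\mathcal{P}_{2m-1}$ separately. Writing $P^{-}:=\mathbf{PAIRS}(2m-2)$ and $P:=\mathbf{PAIRS}(2m)$, Lemma~\ref{conclass} turns each difference, after dividing by $2$, into an expression in the five quantities $\#P$, $\#P^{-}$, $\#_2P$, $\#_2P^{-}$ and $\#_2(P\cap P^{-})$. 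Concretely, $\tfrac12(\mathcal{P}_{2m+1}-\mathcal{P}_{2m})=\#P-\#P^{-}-\#_2P+\#_2(P\cap P^{-})$ and $\tfrac12(\mathcal{P}_{2m}-\mathcal{P}_{2m-1})=\#P-\#P^{-}-\#_2(P\cap P^{-})+\#_2P^{-}$.

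The next step is to pin down the relative position of the two consecutive intervals $P^{-}$ and $P$. By Lemma~\ref{mvs} both are integer intervals; by the monotonicity of $\min\mathbf{PAIRS}$ and $\max\mathbf{PAIRS}$ in steps of $0$ or $1$, each endpoint of $P$ exceeds the corresponding endpoint of $P^{-}$ by at most one; and by (\ref{eq1}) the two intervals are distinct. Hence exactly three positions survive: $P$ is $P^{-}$ shifted up by one (both endpoints $+1$), $P$ is $P^{-}$ with its left endpoint raised (left $+1$, right fixed), or $P$ is $P^{-}$ with its right endpoint extended (left fixed, right $+1$). This is the same \emph{trichotomy} that organizes Table~\ref{MVS}, now read off directly for the $\mathbf{PAIRS}$ sequence.

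Finally I would evaluate the two expressions above in each of the three positions. In every case $P\cap P^{-}$ equals $P$, or $P^{-}$, or the common interval missing one endpoint, so $\#P-\#P^{-}$ contributes a fixed integer in $\{-1,0,1\}$ while the even-counts collapse: after splitting off a single Iverson bracket from the longer interval, exactly as in the fourth step of Section~\ref{2reg}, the remaining $\#_2$-terms cancel and leave a single bracket $\llbracket\,\cdot\,\text{ even}\,\rrbracket\in\{0,1\}$. Summing the two contributions, the halved difference lands in $\{-1,0,1\}$ in all six cases, so $\mathcal{P}_{n+1}-\mathcal{P}_n\in\{-2,0,2\}$.

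The only genuinely delicate point is the second step: justifying the trichotomy by combining (\ref{eq1}) (which rules out equal consecutive intervals) with the step-$0$-or-$1$ monotonicity (which rules out larger jumps), and then keeping the parity bookkeeping consistent, since which bracket survives depends on the parity of an endpoint of $P^{-}$. This has to be carried out for all three positions in both transitions, which is routine but must be done carefully. A handful of the smallest indices, where the recursion of Theorem~\ref{pairs} is not yet in force, are checked directly against Table~\ref{small} and the listed initial terms of $(\mathcal{P}_n)$.
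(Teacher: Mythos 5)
Your proposal is correct and follows essentially the same route as the paper: the paper's proof likewise invokes Equation~(\ref{eq1}) together with the step-$0$-or-$1$ monotonicity to reduce to the same three relative positions of $\mathbf{PAIRS}(2n-2)$ and $\mathbf{PAIRS}(2n)$, and then applies Lemma~\ref{conclass} with $\mathcal{C}:=\#(\mathbf{PAIRS}(2n)\cap\mathbf{PAIRS}(2n-2))$ and $\mathcal{E}:=\#_2(\mathbf{PAIRS}(2n)\cap\mathbf{PAIRS}(2n-2))$, doing exactly your Iverson-bracket parity bookkeeping. The only difference is presentational — the paper tabulates $\mathcal{P}_{2n-1},\mathcal{P}_{2n},\mathcal{P}_{2n+1}$ in all three cases rather than computing the two halved differences directly, and your verified values in the six subcases agree with that table.
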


\begin{proof}
 Due to Equation (\ref{eq1}) we know that there are three possibilities for the relative sizes of $\mathbf{PAIRS}(2n-2)$ and $\mathbf{PAIRS}(2n)$:
 
 \vspace{3pt}
 \begin{tabular}[h]{rlrlrl}
\multirow{2}{*}{ \hspace{5pt}  1.}&$\lbrack a,b\rbrack $&\multirow{2}{*}{  \hspace{5pt} 2.}&$\lbrack a,b\rbrack $&\multirow{2}{*}{ \hspace{5pt}  3.}&$\lbrack a,b\rbrack $\\
&$\lbrack a+1,b\rbrack $&&$\lbrack a,b+1\rbrack $&&$\lbrack a+1,b+1\rbrack $\\
\end{tabular}
 \vspace{3pt}
 
We use Lemma~\ref{conclass} and with $\mathcal{C}:=\#(\mathbf{PAIRS}(2n)\cap\mathbf{PAIRS}(2n-2))$ and $\mathcal{E}:=\#_2(\mathbf{PAIRS}(2n)\cap\mathbf{PAIRS}(2n-2))$
we get

\begin{center}
 \setlength{\tabcolsep}{2.65pt}
\begin{tabular}[h]{cccc}
 \toprule
Case & $\mathcal{P}_{2n-1}$ &$\mathcal{P}_{2n}$&$\mathcal{P}_{2n+1}$\\
\midrule
$1.$&$2(2(\mathcal{C}{+}1){-}\mathcal{E}{-}\llbracket  a \text{ even} \rrbracket)$&$2((\mathcal{C}{+}1){+}\mathcal{C}{-}\mathcal{E})$&$2(2\mathcal{C}{-}\mathcal{E})$\\
$2.$&$2(2\mathcal{C}{-}\mathcal{E})$&$2((\mathcal{C}{+}1){+}\mathcal{C}{-}\mathcal{E})$&$2(2(\mathcal{C}{+}1){-}\mathcal{E}{-}\llbracket  b{+}1 \text{ even} \rrbracket)$\\
$3.$&$2(2(\mathcal{C}{+}1){-}\mathcal{E}{-}\llbracket  a \text{ even} \rrbracket)$&$2((\mathcal{C}{+}1){+}(\mathcal{C}{+}1){-}\mathcal{E})$&$2(2(\mathcal{C}{+}1){-}\mathcal{E}{-}\llbracket  b{+}1 \text{ even} \rrbracket)$\\
\bottomrule
\end{tabular}
\end{center}
 In all three cases, regardless if $a$ and $b+1$ are even or odd, Lemma~\ref{dist} is true.
\end{proof}

Now we want to prove that the sequence $\mathcal{P}_n$ is unbounded.

\begin{lem} \label{unbo}
If $\mathbf{pairs}(n)=\lbrack a,b\rbrack $, $\mathbf{pairs}(n+1)=\lbrack a,b+1\rbrack $ with $n$ and $b$ odd and $a$ even and  then  the sequence $a_0=n$, $a_{i+1}=16a_i-5$ satisfies
\[\mathcal{P}_{a_n}=\mathcal{P}_{a_0}+6n.\]
\end{lem}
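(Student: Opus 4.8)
The plan is to prove, by induction on $i$, that each index $a_i$ again satisfies the hypothesis of the lemma and that one application of the map $n\mapsto 16n-5$ raises the complexity by exactly $6$. Concretely, I would show that if $\mathbf{pairs}(a_i)=\lbrack\alpha_i,\beta_i\rbrack$ and $\mathbf{pairs}(a_i+1)=\lbrack\alpha_i,\beta_i+1\rbrack$ with $a_i,\beta_i$ odd and $\alpha_i$ even, then the same structural statement holds at $a_{i+1}=16a_i-5$, with $\mathcal{P}_{a_{i+1}}=\mathcal{P}_{a_i}+6$. Telescoping over $i=0,\dots,n-1$, with the base case $a_0=n$ being exactly the hypothesis, then yields $\mathcal{P}_{a_n}=\mathcal{P}_{a_0}+6n$. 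Observe that $a_{i+1}=16a_i-5$ is automatically odd, so half of the parity bookkeeping is free.

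For the propagation of the interval data I would iterate the doubling recursion of Theorem~\ref{pairs} four times, which is precisely what Table~\ref{values} tabulates. The only care needed is an index shift: writing $16a_i-5=16(a_i-1)+11$ and taking the table parameter to be $a_i-1$, the configuration $\mathbf{pairs}(a_i)=\lbrack\alpha_i,\beta_i\rbrack$, $\mathbf{pairs}(a_i+1)=\lbrack\alpha_i,\beta_i+1\rbrack$ is exactly case III of the table. Since $16a_i-5$ is odd, I would use relation~(\ref{pairs2}) to get $\mathbf{pairs}(16a_i-5)=\mathbf{PAIRS}(16a_i-6)$ and read off from case III (the row $i=10$, with parameter $a_i-1$) that
\[\mathbf{pairs}(16a_i-5)=\mathbf{PAIRS}(16a_i-6)=\lbrack 5a_i+\alpha_i-3,\;5a_i+\beta_i-1\rbrack.\]
For the next index, $\mathbf{pairs}(16a_i-4)=\mathbf{PAIRS}(16a_i-4)\cup\mathbf{PAIRS}(16a_i-6)$ by relation~(\ref{pairs3}); reading off the row $i=12$ and taking the union shows this equals $\lbrack 5a_i+\alpha_i-3,\;5a_i+\beta_i\rbrack$. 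Thus with $\alpha_{i+1}:=5a_i+\alpha_i-3$ and $\beta_{i+1}:=5a_i+\beta_i-1$ we again have $\mathbf{pairs}(a_{i+1})=\lbrack\alpha_{i+1},\beta_{i+1}\rbrack$ and $\mathbf{pairs}(a_{i+1}+1)=\lbrack\alpha_{i+1},\beta_{i+1}+1\rbrack$. The parity conditions persist: $5a_i$ is odd, so $\alpha_{i+1}$ is $\text{odd}+\text{even}-\text{odd}$, i.e.\ even, and $\beta_{i+1}$ is $\text{odd}+\text{odd}-\text{odd}$, i.e.\ odd.

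For the complexity increment I would apply the odd-index formula of Lemma~\ref{conclass}. Since $a_i$ is odd, $\mathbf{PAIRS}(a_i-1)=\mathbf{pairs}(a_i)=\lbrack\alpha_i,\beta_i\rbrack$ by relation~(\ref{pairs2}), so
\[\mathcal{P}_{a_i}=2\bigl(2\#\mathbf{PAIRS}(a_i-1)-\#_2\mathbf{PAIRS}(a_i-1)\bigr).\]
Because $\alpha_i$ is even and $\beta_i$ is odd, the interval $\lbrack\alpha_i,\beta_i\rbrack$ has even length $L_i:=\beta_i-\alpha_i+1$ and contains exactly $L_i/2$ even numbers, whence $\mathcal{P}_{a_i}=2(2L_i-L_i/2)=3L_i$. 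Since $L_{i+1}=\beta_{i+1}-\alpha_{i+1}+1=L_i+2$, this gives $\mathcal{P}_{a_{i+1}}=3L_{i+1}=3L_i+6=\mathcal{P}_{a_i}+6$, which closes the induction and proves the lemma.

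The main obstacle is entirely the bookkeeping of the second step: one must track not just the endpoints but their exact parities through the four-fold doubling and the index shift, because the count $\#_2$ of even elements, and hence the clean value $\mathcal{P}_{a_i}=3L_i$, depends on $\alpha_i$ being even and $\beta_i$ odd. It is precisely the persistence of these parity conditions, together with $16a_i-5$ always being odd, that makes the induction self-sustaining; without them the per-step increment would fail to be the constant $6$.
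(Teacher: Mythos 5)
Your proposal is correct and takes essentially the same route as the paper's own proof: both show that the case III configuration of Theorem~\ref{pairs} (as tabulated in Table~\ref{values}) persists under $n\mapsto 16n-5$ together with the parities ($a_{i}$, $\beta_i$ odd, $\alpha_i$ even), and then extract the constant increment $6$ from the odd-index formula of Lemma~\ref{conclass}, your $\mathcal{P}_{a_i}=3L_i$ being the paper's $\mathcal{P}_n=6s$ with $L_i=2s$. Your explicit index shift even yields $\mathbf{pairs}(16a_i-4)=\lbrack 5a_i+\alpha_i-3,\,5a_i+\beta_i\rbrack$, which quietly corrects a small slip in the paper, where this interval is attributed to $\mathbf{pairs}(16n-6)$ rather than $\mathbf{pairs}(16n-4)$.
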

\begin{proof}
If we start with two  sets $\mathbf{pairs}(n)=\lbrack a,b\rbrack $ and $\mathbf{pairs}(n+1)=\lbrack a,b+1\rbrack $ we can use  Theorem~\ref{pairs} to get
$\mathbf{pairs}(16n-5)=\lbrack 5n+a-3,5n+b-1\rbrack $ and $\mathbf{pairs}(16n-6)=\lbrack 5n+a-3,5n+b\rbrack $.

So if we start in case III. with $\mathbf{pairs}(n)$ and $\mathbf{pairs}(n+1)$ we will be again in case III. with $\mathbf{pairs}(16n-5)$ and $\mathbf{pairs}(16n-6)$.
Therefore we can concatenate the whole process.

Furthermore, if  $n$ and $b$ are odd and $a$ is even then $16n-5$ and $ 5n+b-1$ are odd and $5n+a-3$ is even. Hence the sets $\#\mathbf{pairs}(n)=\#\lbrack a,b\rbrack $ and $\#\mathbf{pairs}(16n-5)=\#\lbrack a',b'\rbrack $
contain an even number of elements and we have:
\[\#\lbrack a,b\rbrack =2s,\enskip\#\lbrack a',b'\rbrack =2s+2,\enskip\#_2\lbrack a,b\rbrack =s,\enskip\#_2\lbrack a',b'\rbrack =s+1.\]
Since the values $n$ and $16n-5$ are both odd  we know from Theorem~\ref{pairs} that $\mathbf{PAIRS}(n-1)=\mathbf{pairs}(n)$ and $\mathbf{PAIRS}(16n-6)=\mathbf{pairs}(16n-5)$. Now we
apply Lemma~\ref{conclass} and get $\mathcal{P}_n=6s$ and $\mathcal{P}_{16n-5}=6s+6$.
\end{proof}
The sequence $\mathcal{P}_3=6,\mathcal{P}_{43}=12$, $\mathcal{P}_{683}=18,\dots$ is one example of such an unbounded sequence.
In \cite{Karhumaeki2013} they show $\mathcal{P}_t^{(2)}((2\cdot 4^m+4)/3)=\Theta(m)$ which also proves that the sequence $\mathcal{P}_n$ is unbounded.

The last theorem reveals the symmetries  of $\mathcal{P}_n$. 
\begin{table}[h]
\begin{footnotesize}
\begin{center}
\begin{tabular}[h]{ccr @{\hskip 1pt} l@{\hskip -10pt}cr @{\hskip 1pt} l@{\hskip -9pt}c@{\hskip -6pt}cc}
\toprule
Case & $i$ & \multicolumn{2}{c}{$\mathbf{pairs}(n{+}i)$} &  &  \multicolumn{2}{c}{$\mathbf{pairs}(2n{+}i)$}&\hspace{13pt}$\#\mathbf{pairs}(n{+}i)$&&\hspace{13pt}$\#\mathbf{pairs}(2n{+}i)$\\
\midrule
 \multirow{3}{*}{I.} & $\scriptstyle{-}1$  && & \multirow{3}{*}{$\longmapsto$}&$\lbrack a'{+}1$&$, b'\rbrack $&& \multirow{3}{*}{$\longmapsto$}&$\mathcal{C}$\\
    & $\scriptstyle0$&$\lbrack a$&$, b\rbrack $ &&$\lbrack a'{+}1$&$,b'{+}1\rbrack $&$\mathcal{C}$&&$\mathcal{C}+1$\\
    &  $\scriptstyle{+}1$&$\lbrack a$&$, b\rbrack $ & &$\lbrack a'$&$,  b'{+}1\rbrack $&$\mathcal{C}$&&$\mathcal{C}$ \\
 \midrule
   \multirow{3}{*}{II.} & $\scriptstyle{-}1$  &&& \multirow{3}{*}{$\longmapsto$}&$\lbrack a'{+}1$&$,  b'\rbrack $&&\multirow{3}{*}{$\longmapsto$}&$\mathcal{C}+1$\\
    & $\scriptstyle0$&$\lbrack a{+}1$&$, b\rbrack $ &&$\lbrack a'{+}1$&$, b'\rbrack $&$\mathcal{C}+1$&&$\mathcal{C}+1$\\
    &  $\scriptstyle{+}1$&$\lbrack a$&$,  b\rbrack $ &&$\lbrack a'$&$, b'\rbrack $&$\mathcal{C}$&&$\mathcal{C}$\\
 \midrule
 \multirow{3}{*}{III.}  & $\scriptstyle{-}1$  &&& \multirow{3}{*}{$\longmapsto$}&$\lbrack a'$&$, b'\rbrack $&&\multirow{3}{*}{$\longmapsto$}&$\mathcal{C}$\\
    & $\scriptstyle0$&$\lbrack a$&$, b\rbrack $ &&$\lbrack a'$&$, b'{+}1\rbrack $&$\mathcal{C}$&&$\mathcal{C}+1$\\
    &  $\scriptstyle{+}1$&$\lbrack a$&$, b{+}1\rbrack $ &&$\lbrack a'$&$, b'{+}1\rbrack $&$\mathcal{C}+1$&&$\mathcal{C}+1$\\
\bottomrule
\end{tabular}
\end{center}
\end{footnotesize}
 \caption{The action of Theorem~\ref{pairs} on $\mathbf{pairs}(n)$ and $\mathbf{pairs}(n+1)$}\label{set}
\end{table}
\begin{theorem}\label{pal}
 The $2$-abelian complexity of the Thue--Morse word is a concatenation of palindromes of increasing size since the sequence
 \[\mathcal{P}_{2^q+1}\mathcal{P}_{2^q+2}\mathcal{P}_{2^q+3}\cdots\mathcal{P}_{2^{q+1}+1}\]
 is a palindrome, or equivalently
 \[\mathcal{P}_{2^q+1+i}=\mathcal{P}_{2^{q+1}+1-i}\]
for  $0\leq i \leq 2^{q-1}$.
\end{theorem}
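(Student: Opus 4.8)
The plan is to prove, by induction on $q$, a reflection symmetry of the interval–valued function $\mathbf{pairs}$ that is stronger than the palindrome we want, and then to read the palindrome of $\mathcal{P}_n$ off from it via Lemma~\ref{conclass}. Writing $c-[a,b]:=[c-b,c-a]$, the invariant I would carry is
\begin{equation*}
\mathbf{pairs}(2^{q+1}+1-i)=2^q-\mathbf{pairs}(2^q+1+i),\qquad 0\le i\le 2^q.\tag{$\star_q$}
\end{equation*}
The small values in Table~\ref{small} give the base case: on the block $[3,5]$ one checks $\mathbf{pairs}(5)=[1,2]=2-[0,1]=2-\mathbf{pairs}(3)$, while the central set $\mathbf{pairs}(4)=[0,2]$ is fixed by $I\mapsto 2-I$. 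Note that $(\star_q)$ forces the central interval $\mathbf{pairs}(3\cdot2^{q-1}+1)=[a,b]$ to satisfy $a+b=2^q$, which matches the data.

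For the inductive step I would assume $(\star_{q-1})$ and push it through the recursions of Theorem~\ref{pairs}. As the Remark after Lemma~\ref{rec} indicates, the doubling map carries the block $[2^{q-1}+1,2^q+1]$ onto $[2^q+1,2^{q+1}+1]$, so it suffices to verify $(\star_q)$ index by index, splitting on the parity of $i$. When $i$ is even, both $2^q+1+i$ and $2^{q+1}+1-i$ are odd and $\mathbf{pairs}(2m+1)=m-\mathbf{pairs}(m+1)$ applies; substituting $(\star_{q-1})$ collapses both sides of $(\star_q)$ to the common value $(2^{q-1}-\tfrac i2)+\mathbf{pairs}(2^{q-1}+1+\tfrac i2)$. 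When $i$ is odd, both indices are even, so $\mathbf{pairs}(2m)=\bigl(m-\mathbf{pairs}(m+1)\bigr)\cup\bigl((m-1)-\mathbf{pairs}(m)\bigr)$ is a union of two consecutive intervals (Lemma~\ref{mvs}); applying the reflection to each piece, and using that the mirror interchanges the two pieces $\mathbf{PAIRS}(2m)$ and $\mathbf{PAIRS}(2m-2)$, again yields $(\star_q)$.

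To obtain the theorem I would deduce the palindrome of $\mathcal{P}_n$ on $[2^q+1,2^{q+1}+1]$ from $(\star_{q-1})$ using Lemma~\ref{conclass}. The two indices $2^q+1+i$ and $2^{q+1}+1-i$ always have the same parity, so they are governed by the same formula of Lemma~\ref{conclass}. Rewriting each occurring $\mathbf{PAIRS}(2m)$ as $m-\mathbf{pairs}(m+1)$ and applying $(\star_{q-1})$ turns every interval appearing on one side into a translate or reflection of the matching interval on the other side. Since both translation and reflection preserve cardinality, the $\#\mathbf{PAIRS}$ terms match at once, and in the even case the two intersections $\mathbf{PAIRS}(2n)\cap\mathbf{PAIRS}(2n-2)$ are mapped to one another as well, so their cardinalities also agree.

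The main obstacle is the remaining $\#_2$ (number of even elements) bookkeeping: unlike cardinality, $\#_2[x,y]$ depends on the parity of the endpoints, so I must check that mirrored intervals carry equally many even entries. The observation that makes this go through is that whenever such an interval has odd length its two endpoints share a parity, while for even length $\#_2$ is exactly half the length regardless of endpoint parity; computing the relevant endpoints modulo $2$ (using that $2^{q-1}$ is even for $q\ge2$) then shows the mirrored $\#_2$ terms cancel in pairs. The degenerate cases $q=1$ and $q=2$, where this parity argument is not available, I would simply verify directly against the initial segment of $(\mathcal{P}_n)$ from the introduction, checking that $6,8,10,8,6$ and $6,8,8,10,10,10,8,8,6$ are palindromes.
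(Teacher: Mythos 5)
Your proposal is correct, but it takes a genuinely different route from the paper's. The paper never formulates your reflection identity $(\star_q)$; it instead tracks two weaker invariants separately --- that corresponding sets $\mathbf{pairs}(2^q+1+i)=\lbrack a,b\rbrack$ and $\mathbf{pairs}(2^{q+1}+1-i)=\lbrack a',b'\rbrack$ have equal cardinality, and that $a\equiv b'$, $a'\equiv b \pmod 2$ --- each proved by an induction organized around the three-case evolution of consecutive interval pairs (Table~\ref{set}), with cases II and III exchanging under the mirror; moreover, the base case of the parity statement invokes Relation 1 of Theorem 1.1.*, so the paper's palindrome proof leans on the $2$-regularity machinery. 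Your single set-valued invariant $\mathbf{pairs}(2^{q+1}+1-i)=2^q-\mathbf{pairs}(2^q+1+i)$ is strictly stronger and subsumes both at once: since the reflection constant $2^q$ is even, the map $x\mapsto 2^q-x$ preserves parity elementwise, hence preserves both $\#$ and $\#_2$ of any set and commutes with unions and intersections --- which, incidentally, dissolves the $\#_2$ bookkeeping you labor over in your last paragraph (no odd/even-length case split is needed, and the parity obstruction you feared at $q=1,2$ does not exist). Your inductive step is also case-free where the paper's is not: it is direct interval algebra through the recursions of Theorem~\ref{pairs}, and both your even-$i$ collapse to $(2^{q-1}-\tfrac i2)+\mathbf{pairs}(2^{q-1}+1+\tfrac i2)$ and the piece-swapping of $\mathbf{PAIRS}(2m)$ and $\mathbf{PAIRS}(2m-2)$ in the odd-$i$ union check out, with all invoked indices staying inside the range of $(\star_{q-1})$. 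One caveat: Theorem~\ref{pairs} is stated only for $n\geq 4$, and the step from $(\star_1)$ to $(\star_2)$ would apply it at $n=2,3$; so the induction should start at $q=2$ with $(\star_2)$ verified directly from Table~\ref{small} --- your direct check of $q=1,2$ happens to cover exactly this, albeit for a different (parity) reason than the actual one. In sum, your approach buys a self-contained, case-free proof of Theorem~\ref{pal} independent of Theorem 1.1.*, whereas the paper's approach reuses bookkeeping (the case tables) that also drives Table~\ref{MVS} and the regularity proof.
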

\begin{proof}
First we will show by induction that the sequence
\[\#\mathbf{pairs}(2^q+1),\#\mathbf{pairs}(2^q+2),\dots,\#\mathbf{pairs}(2^{q+1}+1)\]
is a palindrome. The base case is
\[\#\mathbf{pairs}(3)=2, \#\mathbf{pairs}(4)=3, \#\mathbf{pairs}(5)=2.\]

Two sets  $\mathbf{pairs}(2^q+1+ i)$ and $\mathbf{pairs}(2^{q+1}+1-i)$ with $0\leq i \leq 2^{q-1}$ are called \emph{corresponding sets}. 
If a consecutive pair of sets is mapped to a consecutive triple of sets
\[\begin{matrix}\\\mathbf{pairs}(2^q+i\hspace{17pt})\\\mathbf{pairs}(2^q+i+1)\end{matrix}\mapsto\begin{matrix}\mathbf{pairs}(2^{q+1}+2i-1)\\\mathbf{pairs}(2^{q+1}+2i\hspace{17pt})\\\mathbf{pairs}(2^{q+1}+2i+1)\end{matrix}\]
with $1\leq i \leq 3\cdot2^{q-1}$ then the corresponding  pair of consecutive sets is mapped to a consecutive triple of sets
\[\begin{matrix}\\\mathbf{pairs}(2^{q+1}-i+1)\\\mathbf{pairs}(2^{q+1}-i+2)\end{matrix}\mapsto\begin{matrix}\mathbf{pairs}(2^{q+2}-2i+1)\\\mathbf{pairs}(2^{q+2}-2i+2)\\\mathbf{pairs}(2^{q+2}-2i+3)\end{matrix}.\]

Now we look at Table~\ref{set} and see that it is enough to know the relative sizes of consecutive pairs of sets do determine in which case we are. So if a 
consecutive  pair of sets is mapped to a consecutive  triple of sets via case II. the corresponding  consecutive pair of sets   is
mapped to a corresponding consecutive triple of sets via case III. and vise versa. If we have a case I. map for the consecutive pair of sets  we also have a case I. map for the
corresponding consecutive   
 pair of sets. In all three cases the palindromic structure of the set cardinality is preserved.

Now we show that \[\#_2\mathbf{pairs}(2^q+1),\#_2\mathbf{pairs}(2^q+2),\dots,\#_2\mathbf{pairs}(2^{q+1}+1)\]
is a palindrome too. We do this by showing that for two corresponding sets  $\mathbf{pairs}(2^q+1+ i)=\lbrack a,b\rbrack $ and $\mathbf{pairs}(2^{q+1}+1-i)=\lbrack a',b'\rbrack $ we have
\begin{equation} \label{eq}
 a\equiv b' \pmod 2 \text{ and }a'\equiv b \pmod 2. 
\end{equation}
 Since two corresponding sets have the same cardinality we can conclude that $\#_2\lbrack a,b\rbrack =\#_2\lbrack a',b'\rbrack $.

 Since $\mathcal{P}_3=6$, Relation 1. tells us $\mathcal{P}_{2^q+1}=6$,
so Equation (\ref{eq}) is true for our base case,  the corresponding sets
$\mathbf{pairs}(2^q+1)$ and $\mathbf{pairs}(2^{q+1}+1)$.

We use induction to go from $\mathbf{pairs}(2^q+1+ i)$ and $\mathbf{pairs}(2^{q+1}+1-i)$ to
$\mathbf{pairs}(2^q+1+ (i+1))$ and $\mathbf{pairs}(2^{q+1}+1-(i+1))$. We have to check the three cases from Table~\ref{set} again:

In the first case nothing changes, we go from $\lbrack a,b\rbrack $ and $\lbrack a',b'\rbrack $ to $\lbrack a,b\rbrack $ and $\lbrack a',b'\rbrack $ and Equation~\ref{eq} is trivially fulfilled.

In the second case we go from $\lbrack a,b\rbrack $ and $\lbrack a',b'\rbrack $ to $\lbrack a+1,b\rbrack $ and $\lbrack a',b'-1\rbrack $ and Equation~\ref{eq} is fulfilled again.

In the third case the step is from $\lbrack a,b\rbrack $ and $\lbrack a',b'\rbrack $ to $\lbrack a,b+1\rbrack $ and $\lbrack a'-1;b'\rbrack $ and Equation~\ref{eq} holds.

Since \[\#\mathbf{pairs}(2^q+1+ i)=\#\mathbf{pairs}(2^{q+1}+1-i)\]
and also
\[\#_2\mathbf{pairs}(2^q+1+ i)=\#_2\mathbf{pairs}(2^{q+1}+1-i)\]
with $0\leq i \leq 2^{q-1}$ we can use Theorem~\ref{pairs} and to get
 \[\#\mathbf{PAIRS}(2^q+ 2i)=\#\mathbf{PAIRS}(2^{q+1}-2i)\]
and also
\[\#_2\mathbf{PAIRS}(2^q+ 2i)=\#_2\mathbf{PAIRS}(2^{q+1}-2i)\]
with $0\leq i \leq 2^{q-2}$. Now we use Lemma~\ref{conclass} and get
$\mathcal{P}_{2^q+1+i}=\mathcal{P}_{2^{q+1}+1-i}$ with $0\leq i \leq 2^{q-1}$.
\end{proof}

\section*{Acknowledgements}

The author is supported by the Austrian Science Fund FWF projects 
 W1230, Doctoral Program ``Discrete Mathematics'', and F5503 (part of the special research program (SFB) ``Quasi-Monte Carlo Methods: Theory and Applications'').
 The author is grateful to his advisor Peter Grabner and a anonymous referee whose comments and feedback greatly improved this article.
\bibliographystyle{abbrv}
\bibliography{Literatur.bib}
\end{document}